\newtheorem{theorem}{Theorem}[section]
\newtheorem{lemma}[theorem]{Lemma}
\newtheorem{corollary}[theorem]{Corollary}
\newtheorem{claim}[theorem]{Claim}
\theoremstyle{definition}
\newtheorem{definition}[theorem]{Definition}
\newtheorem{definitions and remarks}[theorem]{Definitions and Remarks}
\theoremstyle{remark}
\newtheorem{remark}[theorem]{Remark}
\newtheorem{remarks}[theorem]{Remarks}
\numberwithin{equation}{section}
\newcommand{\Jac}{\mathrm{Jac}\,}
\newcommand{\al}{{\alpha}}
\newcommand{\be}{{\beta}}
\newcommand{\De}{{\Delta}}
\newcommand{\ga}{{\gamma}}
\newcommand{\p}{{\partial}}
\newcommand{\s}{{\sigma}}
\newcommand{\vp}{{\varphi}}
\newcommand{\IN}{{\mathbb N}}
\newcommand{\IR}{{\mathbb R}}
\newcommand{\cC}{{\mathcal C}}
\newcommand{\cF}{{\mathcal F}}
\newcommand{\cO}{{\mathcal O}}
\newcommand{\cQ}{{\mathcal Q}}
\newcommand{\tbe}{{\tilde \be}}
\newcommand{\hf}{{\hat f}}
\newcommand{\hxi}{{\hat \xi}}
\newcommand{\hzeta}{{\hat \zeta}}
\newcommand{\hs}{{\hat \s}}
\newcommand{\hvp}{{\hat \vp}}
\newcommand{\hpsi}{{\hat \psi}}
\newcommand{\llb}{{[\![}}
\newcommand{\rrb}{{]\!]}}
\newcommand{\RN}[1]{%
  \textup{\uppercase\expandafter{\romannumeral#1}}%
}
\begin{document}
\title[Composite quasianalytic functions]{Composite quasianalytic functions}

\author[A.~Belotto]{Andr\'e Belotto da Silva}
\author[E.~Bierstone]{Edward Bierstone}
\author[M.~Chow]{Michael Chow}
\address[A.~Belotto]{Universit\'e Paul Sabatier, Institut de Math\'ematiques de Toulouse,
118 route de Narbonne, F-31062 Toulouse Cedex 9, France}
\email{andre.belotto\_da\_silva@math.univ-toulouse.fr}
\address[E.~Bierstone and M.~Chow]{University of Toronto, Department of Mathematics, 40 St. George Street,
Toronto, ON, Canada M5S 2E4}
\email{bierston@math.toronto.edu}
\email{mikey.chow@mail.utoronto.ca}
\thanks{Research supported by CIMI LabEx (Belotto), NSERC OGP0009070 (Bierstone), 
NSERC USRA and OGP0009070 (Chow)}

\subjclass[2010]{Primary 03C64, 26E10, 32S45; Secondary 30D60, 32B20}

\keywords{quasianalytic, Denjoy-Carleman class, o-minimal structure, resolution of singularities, composite function theorem, 
quasianalytic continuation}

\begin{abstract}
We prove two main results on Denjoy-Carleman classes: (1) a composite function theorem which asserts 
that a function $f(x)$ in a quasianalytic Denjoy-Carleman class $\cQ_M$, which is formally composite with a generically submersive mapping $y=\vp(x)$ of class $\cQ_M$, at a single given point in the source (or in the target) of $\vp$ can be written locally as $f=g\circ\vp$,
where $g(y)$ belongs to a shifted Denjoy-Carleman class $\cQ_{M^{(p)}}$; (2) a statement on a similar loss of regularity 
for functions definable in the $o$-minimal
structure given by expansion of the real field by restricted functions of quasianalytic class $\cQ_M$. 
Both results depend on an estimate for the regularity of a $\cC^\infty$ solution $g$ of the equation $f=g\circ\vp$, with $f$ and $\vp$ as above. The composite function result depends also on a \emph{quasianalytic continuation} theorem, 
which shows that the formal assumption at a given point in (1) propagates to a formal composition condition
at every point in a neighbourhood. 
\end{abstract}

\date{\today}
\maketitle
\setcounter{tocdepth}{1}
\tableofcontents

\section{Introduction}\label{sec:intro}

This article contains two main results on Denjoy-Carleman classes: 

\smallskip\noindent
(1) a composite function theorem which asserts that a function $f(x)$ in a quasianalytic Denjoy-Carleman class $\cQ_M$, which is formally composite with a generically submersive mapping $y=\vp(x)$ of class $\cQ_M$, at a given point
$x=a$ (or at a given point $y=b$) can be written locally as $f=g\circ\vp$,
where $g(y)$ belongs to a shifted Denjoy-Carleman class $\cQ_{M^{(p)}}$ (see Theorems \ref{thm:source} and \ref{thm:target} for 
precise versions of this
assertion that are local at $a$ or $b$, respectively); 

\smallskip\noindent
(2) a statement on a similar loss of regularity for functions definable in the $o$-minimal
structure given by expansion of the real field by restricted functions of quasianalytic class $\cQ_M$ (Theorem \ref{thm:definable}).

\smallskip
Both results depend on an estimate for the regularity of a $\cC^\infty$ solution $g$ of the equation $f=g\circ\vp$, with $f$ and $\vp$ as above
(Theorem \ref{thm:estimate}). The composite function result depends also on a \emph{quasianalytic continuation} theorem (Theorem
\ref{thm:contin} below), which shows that the formal assumption at a given point in (1) propagates to a formal composition condition
at every point in a neighbourhood. 

Quasianalytic Denjoy-Carleman classes go back to E. Borel \cite{Borel} and were characterized (following questions of Hadamard in
studies of linear partial differential equations \cite{Had}) by the Denjoy-Carleman theorem \cite{Den}, \cite{Carl}. Quasianalytic classes
arise in model theory as the classes of $\cC^\infty$ functions that are definable in a given polynomially-bounded $o$-minimal
structure \cite{Mil}, \cite{RSW}.
Relevant background on Denjoy-Carleman and quasianalytic classes is presented in Section \ref{sec:quasian} below. 

Given a $\cC^\infty$ function $f(x)$ or a $\cC^\infty$
mapping $y=\vp(x)$ defined on an open subset of $\IR^n$, we write $\hf_a$ or $\hvp_a$ for the formal Taylor expansions at a point $x=a$.

\begin{theorem}\label{thm:source}
Let $\cQ_M$ denote a quasianalytic Denjoy-Carleman class, and let $\vp: V \to W$ be a mapping of class $\cQ_M$ between 
open sets $V\subset \IR^m$ and $W\subset \IR^n$, which is generically a submersion (i.e., generically of rank $n$). Let $f \in \cQ_M(V)$. 
If $a \in V$ and $\hf_a = G\circ\hvp_a$, where $G$ is a formal power series centred at $\vp(a)$, then there is a relatively compact 
neighbourhood $U$ of $a$ in $V$ and a function $g\in \cQ_{M^{(p)}}(\vp(\overline{U}))$, for some $p\in \IN$, such that $f=g\circ\vp$ on $U$.
\end{theorem}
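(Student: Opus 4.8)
The plan is to deduce the theorem from the two principal technical inputs of the paper — the quasianalytic continuation theorem (Theorem~\ref{thm:contin}) and the regularity estimate for $\cC^\infty$ solutions (Theorem~\ref{thm:estimate}) — together with the classical composite differentiable function theorem for generically submersive mappings. The genuinely hard analysis is packaged in the two cited theorems; what remains here is to chain them together and to organize the shrinking of neighbourhoods.

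First I would upgrade the pointwise hypothesis to a condition holding throughout a neighbourhood. Applying Theorem~\ref{thm:contin} to the assumption $\hf_a = G\circ\hvp_a$ yields a neighbourhood $U_0 \Subset V$ of $a$ such that, for every $x \in U_0$, the Taylor expansion $\hf_x$ is a formal composite with $\hvp_x$: there is a formal power series $G_x$ centred at $\vp(x)$ with $\hf_x = G_x\circ\hvp_x$. Thus $f$ satisfies the formal composition condition with respect to $\vp$ at every point of $U_0$, and in particular at the critical points of $\vp$ in $U_0$ — which is the information a smooth factorization requires, and is precisely what is not available from the hypothesis at $a$ alone.

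Next I would produce a $\cC^\infty$ factorization. Since $\vp$ is of class $\cQ_M$, hence $\cC^\infty$, and is generically a submersion, and since $f$ now satisfies the formal composition condition on all of $U_0$, the composite differentiable function theorem (in its form for generically submersive maps) provides a $\cC^\infty$ function $g$, defined on a neighbourhood of $\vp(\overline U)$ for a suitable relatively compact open $U$ with $a \in U$ and $\overline U \subset U_0$, such that $f = g\circ\vp$ on $U$. The one point requiring comment is that the proof of that theorem — which reduces $\vp$ to monomial normal form by a locally finite sequence of admissible blowings-up and then solves an elementary division problem upstairs — uses only resolution of singularities, which is available in quasianalytic Denjoy--Carleman classes, so it applies with $\vp$ merely of class $\cQ_M$ rather than real analytic. (If Theorem~\ref{thm:estimate} is stated so as also to assert existence of a smooth solution, one may read $g$ off directly from it and skip this paragraph.)

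Finally I would apply Theorem~\ref{thm:estimate} to $f$, $\vp$ and the $\cC^\infty$ solution $g$ over $U$. The estimate bounds the successive derivatives of $g$ on the compact set $\vp(\overline U)$ by the sequence defining the shifted Denjoy--Carleman class $\cQ_{M^{(p)}}$, for some $p\in\IN$; hence $g\in\cQ_{M^{(p)}}(\vp(\overline U))$. Together with $f = g\circ\vp$ on $U$, this is the assertion of the theorem. The main obstacle, apart from the hard work already done in Theorems~\ref{thm:contin} and~\ref{thm:estimate}, is the bookkeeping that keeps the neighbourhood $U$ and the compact $\overline U$ consistent through the two constructions, so that the $g$ furnished by the composite function theorem is the same $g$ to which the estimate is applied and lives in the shifted class on $\vp(\overline U)$.
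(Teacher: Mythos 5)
Your proposal is correct and follows essentially the same route as the paper: the authors likewise use Theorem~\ref{thm:contin} to propagate the single-point hypothesis to formal compositeness on a neighbourhood of $a$ (see Remark~\ref{rem:comp}), then invoke Nowak's quasianalytic version of Glaeser's theorem to obtain a $\cC^\infty$ solution $g$, and finally apply Theorem~\ref{thm:estimate} (or Corollary~\ref{cor:estimate}) to place $g$ in $\cQ_{M^{(p)}}$. The only organizational difference is that the paper routes the argument through Theorem~\ref{thm:target} rather than arguing directly at $a$, which changes nothing of substance.
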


\begin{theorem}\label{thm:target}
Let $\cQ_M$ denote a quasianalytic Denjoy-Carleman class, and let $\vp: V \to W$ be a $\cQ_M$-mapping of  
$\cQ_M$-manifolds which is proper and generically submersive. Let $f \in \cQ_M(V)$ and let $b\in \vp(V)$.
Suppose that $\hf_a = G\circ\hvp_a$, for all $a\in \vp^{-1}(b)$, where $G$ is a formal power series centred at $b$. 
Then, after perhaps shrinking $W$ to an appropriate neighbourhood of $b$,
there exists $g\in \cQ_{M^{(p)}}(\vp(V))$, for some $p$, such that $f=g\circ\vp$.
\end{theorem}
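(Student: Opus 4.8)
The plan is to derive Theorem~\ref{thm:target} from Theorem~\ref{thm:source}, together with the quasianalytic continuation theorem (Theorem~\ref{thm:contin}) and the quasianalyticity of the classes $\cQ_{M^{(p)}}$, by a compactness argument along the fibre $\vp^{-1}(b)$ followed by a gluing step.

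First I would reduce to a finite family of source-local solutions. Since $\vp$ is proper, $K:=\vp^{-1}(b)$ is compact; since $\hf_a=G\circ\hvp_a$ for every $a\in K$, Theorem~\ref{thm:source} applies at each $a\in K$ and produces a relatively compact open neighbourhood $U_a\ni a$, an integer $p_a$, and $g_a\in\cQ_{M^{(p_a)}}(\vp(\overline{U_a}))$ with $f=g_a\circ\vp$ on $U_a$. Choosing a finite subcover $U_1,\dots,U_N$ of $K$ and setting $p:=\max_i p_i$, we may regard all the $g_i$ as elements of $\cQ_{M^{(p)}}$, each $U_i$ meeting $K$. As $\vp$ is proper it is closed, so $\vp(V\setminus\bigcup_i U_i)$ is closed and omits $b$; hence there is a connected open neighbourhood $W'$ of $b$ in $W$ with $\vp^{-1}(W')\subset\bigcup_i U_i$, and, by properness again, $W'$ may be chosen so that $\vp^{-1}(W')$ is an arbitrarily small neighbourhood of $K$. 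Replacing $W$ by $W'$, $V$ by $\vp^{-1}(W')$, and each $U_i$ by $U_i\cap\vp^{-1}(W')$, we may assume $\{U_i\}$ covers $V$ and $\vp(V)\subset W'$. It then suffices to glue the $g_i$ into a single $g$ with $g=g_i$ on $\vp(U_i)$ for each $i$: since $g_i$ has class $\cQ_{M^{(p)}}$ near every point of $\vp(\overline{U_i})$ and $\bigcup_i\vp(U_i)=\vp(V)$, such a $g$ lies in $\cQ_{M^{(p)}}(\vp(V))$ and satisfies $f=g\circ\vp$; and replacing $W$ by $W'$ is exactly the shrinking allowed in the statement.

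The content of the proof is therefore the consistency of the local solutions: $g_i(y)=g_j(y)$ for all $y\in\vp(U_i)\cap\vp(U_j)$. The tool is the quasianalytic continuation theorem, applied at each point of $K$ and combined with the finite cover: after the shrinking above, the formal composition condition holds at every point of $V$, i.e.\ $\hf_{a'}=G_{a'}\circ\hvp_{a'}$ with $G_{a'}$ a formal series centred at $\vp(a')$, and this family is coherent along $\vp$ where $\vp$ is submersive. Indeed, at a submersive point $a'$ the equation $\hf_{a'}=(\,\cdot\,)\circ\hvp_{a'}$ has a unique formal solution --- in coordinates where $\vp$ is a linear projection, $\hvp_{a'}^*$ is injective on formal power series --- which by quasianalyticity is realized by an honest local factorization $f=h_{a'}\circ\vp$, $h_{a'}\in\cQ_M$; these honest solutions agree on overlaps, hence depend only on $\vp(a')$. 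Comparing with $f=g_i\circ\vp$ on $U_i$ forces $\widehat{(g_i)}_{\vp(a')}=G_{a'}$ for every submersive $a'\in U_i$, so that if $y=\vp(a')=\vp(a'')$ with $a'\in U_i$, $a''\in U_j$ both submersive, then $\widehat{(g_i)}_y=G_{a'}=G_{a''}=\widehat{(g_j)}_y$, whence $g_i=g_j$ near $y$ by quasianalyticity of $\cQ_{M^{(p)}}$. Since $\vp$ is generically submersive, the submersive points are dense in each $U_i$, their images are dense in $\vp(U_i)$, and $\{g_i=g_j\}$ is closed; after one further shrinking of $W$ about $b$ this should propagate the agreement to all of $\vp(U_i)\cap\vp(U_j)$.

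The main obstacle is exactly this consistency step. Two features make it delicate: the fibre $K=\vp^{-1}(b)$ need not be connected, so the coherence of the formal data $\{G_{a'}\}$ cannot be read off from a single germ at $b$ but must be produced by the continuation theorem on the submersive locus; and the sets $\vp(U_i)$ are in general not open, so agreement on the dense submersive part does not automatically extend to the whole overlap --- this is where one must shrink the target neighbourhood of $b$ and exploit the closedness of $\{g_i=g_j\}$ together with the local-projection structure of $\vp$ over submersive points. Once the $g_i$ are known to agree wherever they are jointly defined, gluing them and invoking stability of $\cQ_{M^{(p)}}$ under restriction to images of open sets finishes the proof.
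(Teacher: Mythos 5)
Your argument is circular relative to the logical structure of the paper. You derive Theorem~\ref{thm:target} from Theorem~\ref{thm:source}, but Theorem~\ref{thm:source} is not proved independently here: it is obtained \emph{from} Theorem~\ref{thm:target} together with the continuation theorem, and the reduction you describe is precisely the converse direction recorded in Remark~\ref{rem:comp}, which only asserts that the two theorems are \emph{equivalent}. The paper's actual proof of Theorem~\ref{thm:target} is direct and rests on two ingredients your proposal never engages: after Theorem~\ref{thm:contin} shrinks $W$ so that $f$ is formally composite with $\vp$ throughout the image, the quasianalytic version \cite{Ncomp} of Glaeser's theorem produces a $\cC^\infty$ solution $g$ of $f=g\circ\vp$ on $W$, and then the regularity estimate (Theorem~\ref{thm:estimate}, via Corollary~\ref{cor:estimate}) shows that this $\cC^\infty$ solution lies in $\cQ_{M^{(p)}}$. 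The analytic core of the theorem --- passing from formal compositeness to an honest $\cC^\infty$ solution, and the loss-of-regularity exponent $p$ obtained after monomializing the Jacobian by resolution of singularities --- is entirely hidden inside the black box ``Theorem~\ref{thm:source}'' in your write-up, so the essential content is never addressed. To make your route legitimate you would have to supply an independent proof of Theorem~\ref{thm:source}, and any such proof must confront exactly the same two ingredients locally.

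Two secondary points about the gluing, in case you rework the argument in the correct direction. First, your consistency step cannot be finished by density of submersive values plus closedness of $\{g_i=g_j\}$ alone: the sets $\vp(U_i)$ need not be open, and a point of $\vp(U_i)\cap\vp(U_j)$ need not be a limit of points that are \emph{simultaneously} submersive values from $U_i$ and from $U_j$. What actually saves the step is the stronger output of Theorem~\ref{thm:contin}(2) (through Lemmas~\ref{lem:contin} and \ref{lem:nowak}): the coefficients $f_\be$ defining $H_b$ are quasianalytic functions constant on the fibres of $\vp$, so $y\mapsto H_y$ is a single coherent family on all of $\vp(V)$ and $g_i(y)=H_y(0)=g_j(y)$ follows from continuity of $f$ and properness of $\vp$. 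Second, $g\in\cQ_{M^{(p)}}(\vp(V))$ means by definition that $g$ is the restriction of a single $\cC^\infty$ function satisfying the estimates near $\vp(V)$; after matching the $g_i$ on overlaps of the (generally non-open) images you still owe a Whitney-type extension across $\vp(V)$, which is precisely the difficulty Glaeser's theorem exists to resolve --- another indication that it cannot be bypassed.
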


The formal expansions in Theorem \ref{thm:target} can be defined using local coordinate systems, and the condition
$\hf_a = G\circ\hvp_a$ is independent of the choice of local coordinates at $a$ and $b$. 

We recall that in Glaeser's $\cC^\infty$ composition theorem \cite{G}, where $\vp$ is real-analytic, $f$ is $\cC^\infty$ and
we seek a $\cC^\infty$ solution $g$, it is necessary to assume the existence of a formal solution at every point of 
the target. It is striking that, as a result of quasianalytic continuation, 
it is sufficient to assume there is a formal solution at a single point in Theorems \ref{thm:source} and \ref{thm:target}. In particular, 
Theorem \ref{thm:source} reduces to the classical statement in the real-analytic case (cf. \cite[Lemma 7.8]{BMihes}, \cite{Malg}), 
even though it seems unlikely that the ring of formal power series at a point
is flat over the local ring of germs of functions of class $\cQ_M$, in general.

The main ingredients in our proofs of Theorems \ref{thm:source}, \ref{thm:target} are 
Theorems \ref{thm:contin} and \ref{thm:estimate}, and a quasianalytic version \cite{Ncomp} of 
Glaeser's theorem \cite{G}. It follows from Theorem \ref{thm:contin} that Theorems \ref{thm:source} and 
\ref{thm:target} are equivalent (see Remark \ref{rem:comp}) and also that, in Theorem \ref{thm:target}, we can weaken the assumption to 
$\hf_a = G\circ\hvp_a$, for a single point $a$ in each connected component of $\vp^{-1}(b)$.

\subsection{Quasianalytic continuation}\label{subsec:contin}
The following theorem is a further development of quasianalytic continuation techniques introduced in  \cite[Sect.\,4]{BBB}.

\begin{theorem}\label{thm:contin}
Let $\cQ$ denote a quasianalytic class (see Definition \ref{def:quasian}).
Let $\vp: V \to W$ denote a $\cQ$-mapping, where $V$ is a $\cQ$-manifold and $W$ is an open neighbourhood of $0\in \IR^n$.
Let $f \in \cQ(V)$ and let $H$ denote a formal power series at $0\in \IR^n$. Then:
\begin{enumerate}
\item $\displaystyle{\{a \in \vp^{-1}(0): \hf_a = H\circ\hvp_a\}}$ 
is open and closed in $\vp^{-1}(0)$.
\smallskip
\item Suppose that $\vp$ is proper and generically of rank $n$.
Assume $\hf_a = H\circ\hs_a$, for all $a \in \vp^{-1}(0)$. Then, after perhaps shrinking $W$, 
$f$ is \emph{formally composite} with $\vp$; i.e.,
for all $b \in W$, there exists a power series $H_b$ centred at $b$ such that $\hf_a = H_b\circ\hvp^*_a$, for all $a \in \vp^{-1}(b)$.
\end{enumerate}
\end{theorem}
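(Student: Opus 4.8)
The plan is to exploit quasianalyticity in the form of the unique-continuation principle, applied not directly to $f$ but to the family of differences measuring failure of the composition condition. First I would set up notation: given a formal power series $H = \sum_\al c_\al y^\al$ centred at $0$ and a point $a$ with $\vp(a)=0$, the condition $\hf_a = H\circ\hvp_a$ is an infinite list of equations on the Taylor coefficients of $f$ and of the components of $\vp$ at $a$, obtained by formally substituting $\hvp_a - \vp(a)$ into $H$. Crucially, for each multi-index $\be$ the $\be$-th Taylor coefficient of $H\circ\hvp_a$ is a universal polynomial in the coefficients $c_\al$ (with $|\al|\le|\be|$) and in finitely many derivatives of $\vp$ at $a$; since $\vp\in\cQ$, each such coefficient is itself a $\cQ$-function of $a$ (this uses that $\cQ$ is closed under differentiation, which is part of Definition \ref{def:quasian}). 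Likewise the $\be$-th Taylor coefficient of $\hf_a$ is a $\cQ$-function of $a$. Hence for each $\be$ the function $E_\be(a) := (\p^\be f)(a) - \be!\,[H\circ\hvp_a]_\be$ lies in $\cQ$ on (a chart of) $\vp^{-1}(0)$, and the set in part (1) is exactly $\bigcap_\be E_\be^{-1}(0)$, which is visibly closed.

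For openness in part (1), fix $a_0$ in the set and work in a connected chart $\Om\subset V$ around $a_0$; restricting each $E_\be$ to a connected component of $\vp^{-1}(0)\cap\Om$ through $a_0$ — here one should note $\vp^{-1}(0)$ is a $\cQ$-set, so after a further (local, finite) resolution or stratification one may assume the relevant component of $\vp^{-1}(0)$ is (the image of) a connected $\cQ$-manifold, along which restriction preserves class $\cQ$. The vanishing of $f$ to infinite order along this submanifold-like locus, together with $H\circ\hvp$ also vanishing there in the appropriate sense, forces each $E_\be$ to vanish identically on the component by quasianalyticity (a $\cQ$-function vanishing to infinite order at a point of a connected $\cQ$-manifold vanishes identically). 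This is the step I expect to be the main obstacle: making precise the sense in which "$\hf_{a_0}=H\circ\hvp_{a_0}$ at one point" propagates — one must verify that the hypothesis at $a_0$ makes every $E_\be$ and all its derivatives vanish at $a_0$ \emph{along the component}, which requires carefully relating derivatives of $E_\be$ in directions tangent to $\vp^{-1}(0)$ to the assumed formal identity, and handling the singularities of $\vp^{-1}(0)$ via resolution of singularities for $\cQ$-sets (as in \cite[Sect.\,4]{BBB}).

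For part (2), I would argue as follows. Properness of $\vp$ makes $\vp^{-1}(0)$ compact, so by part (1) the set $\{a\in\vp^{-1}(0): \hf_a = H\circ\hvp_a\}$, being open and closed and by hypothesis all of $\vp^{-1}(0)$, is fine; the content is to produce, for nearby $b$, a series $H_b$ with $\hf_a = H_b\circ\hvp^*_a$ for \emph{all} $a\in\vp^{-1}(b)$. The generic rank $n$ assumption gives a point $a^*\in\vp^{-1}(0)$ where $\vp$ is a submersion; near $a^*$ one can choose $\cQ$-coordinates in which $\vp$ is a linear projection, so $H$ is recovered as $\hf_{a^*}$ expressed in the target coordinates, and for $b$ near $0$ one \emph{defines} $H_b$ to be the corresponding Taylor expansion of $f$ along the slice $\vp^{-1}(b)$ near the submersion point — this is a formal power series depending on $b$. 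It remains to check $\hf_a = H_b\circ\hvp^*_a$ at every other $a\in\vp^{-1}(b)$: fix such $b$ and apply part (1) to the fibre $\vp^{-1}(b)$ (after translating $b$ to the origin) with the series $H_b$; by construction the identity holds at the point of $\vp^{-1}(b)$ near $a^*$, hence by the open-closed dichotomy of part (1) it holds on the entire connected component of $\vp^{-1}(b)$ containing that point. To reach all components one invokes properness together with a connectedness/deformation argument in the total space: the set of $b$ for which $H_b$ (defined via the chosen submersion chart) works at \emph{all} points of $\vp^{-1}(b)$ is open and closed in a neighbourhood of $0$, by a relative version of part (1) applied fibrewise and compactness of the fibres, so after shrinking $W$ to a connected neighbourhood of $0$ it is everything. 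The last step is where properness is essential and where one must be careful that the number of connected components of $\vp^{-1}(b)$ is locally constant, which again follows from properness of the $\cQ$-map $\vp$.
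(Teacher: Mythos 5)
There are genuine gaps in both parts. For part (1), your reduction to the coefficient functions $E_\be$ gives closedness, but your mechanism for openness does not work as stated: restricting $E_\be$ to (a resolution of) a connected component of $\vp^{-1}(0)$ and invoking quasianalytic unique continuation requires knowing that $E_\be$ vanishes \emph{to infinite order} along the fibre at $a_0$, whereas the hypothesis directly gives only $E_\be(a_0)=0$; you flag this as the main obstacle but do not resolve it, and it is precisely the content of the statement. The paper quotes part (1) from \cite[Prop.\,4.6]{BBB}, where the working mechanism is different: each $E_\be$ is the restriction to $\vp^{-1}(0)$ of a function of class $\cQ$ on $V$; the quasianalytic curve selection lemma shows $E_\be$ vanishes identically along every quasianalytic arc in $\vp^{-1}(0)$ through $a_0$ (because $\hf_{a_0}-\sum_{|\al|\le|\be|}c_\al\hvp_{a_0}^\al$ lies in a high power of the ideal generated by the components of $\hvp_{a_0}$, which is annihilated after differentiation by composition with the arc), and topological noetherianity of decreasing chains of quasianalytic sets then yields a \emph{single} neighbourhood of $a_0$ working for all $\be$ simultaneously --- without that step the neighbourhood on which $E_\be=0$ could shrink to nothing as $|\be|\to\infty$. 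These are exactly the two tools used in Lemma \ref{lem:nowak}.

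For part (2) the gap is more serious: a proper, generically submersive map need not be submersive at \emph{any} point of $\vp^{-1}(0)$ --- the blowing-up of a point in $\IR^2$ is proper, generically of rank $2$, and has rank $1$ at every point of the exceptional fibre --- so your point $a^*$ need not exist and the construction of $H_b$ never gets started. (Your subsequent claims, that the set of good $b$ is open and closed by ``a relative version of part (1)'' and that the number of connected components of $\vp^{-1}(b)$ is locally constant by properness, are also unsupported; the latter already fails for $x\mapsto x^2$ on $\IR$.) The paper's route is different: extend $\vp$ locally to an equidimensional map $\psi_U=(\vp,x_{n+1},\dots,x_m)$, resolve so that the Jacobian determinant becomes a monomial times a unit, use the division axiom to construct functions $f_\be\in\cQ$ on the resolved space realizing the formal derivatives of $H$ (Lemma \ref{lem:contin}), prove via curve selection and noetherianity that the $f_\be$ are constant on all nearby fibres, not only the central one (Lemma \ref{lem:nowak}) --- this is the key lemma your outline lacks --- glue (Lemma \ref{lem:glue}), and descend using injectivity of composition with the formal expansion of the resolution map. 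You would need a substitute for each of these steps.
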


The composite function theorem \ref{thm:target} was proved by Chaumat and Chollet \cite{CC} in the special case that $\vp$ is a 
real-analytic mapping, under the stronger assumption that $f$ is formally composite with $\vp$ as in the conclusion 
of Theorem \ref{thm:contin}(2)
(i.e., throughout the image of $\vp$, as in Glaeser's theorem).
Theorem \ref{thm:estimate} below is a generalization to a mapping in a given Denjoy-Carleman class of an estimate for a real-analytic
mapping in \cite[Sect.\,III, Prop.\,8]{CC}.

\subsection{Regularity estimates}\label{subsec:est} Theorem \ref{thm:estimate} and Corollary \ref{cor:estimate}
following will be proved in Section 4. Our proof of Theorem \ref{thm:estimate} follows
the reasoning used in \cite[Lemmas 3.1,\,3.4]{BBB}, which give more precise estimates in the special cases that $\s$ is 
a power substitution or a blowing-up. See Section 2 for the notation used in the theorem.

\begin{theorem}\label{thm:estimate} 
Let $\cQ_M$ denote a Denjoy-Carleman class (see Definition \ref{def:DC}). Let $\s: V \to W$ denote a mapping of
class $\cQ_M$ between open subsets of $\IR^n$, such that the Jacobian determinant $\det (\p \s / \p x)$ is a monomial
$x^\ga = x_1^{\ga_1}\cdots x_n^{\ga_n}$ times a nowhere vanishing factor. Let $g \in \cC^\infty(W)$ and assume that
$f := g\circ \s \in \cQ_M(V)$. Then, for every compact $K\subset V$, $g$ is of class $\cQ_{M^{(p)}}$ on $\s(K)$, where
\begin{equation}\label{eq:DCest}
p := 2|\ga| +1;
\end{equation}
i.e., there exist constants $A,B > 0$ such that
$$
\left|\frac{\p^{|\be|}g}{\p y^{\be}}\right| \leq A B^{|\be|} \be! M_{p|\be|}\quad \text{ on } \s(K), \text{ for every } \be \in \IN^n.
$$
Moreover, if $\cQ_M$ is closed under differentiation (in particular, if $\cQ_M$ is a quasianalytic Denjoy-Carleman class;
see Section \ref{sec:quasian}),
then we can take
\begin{equation}\label{eq:quasianDCest}
p = |\ga| +1.
\end{equation}
\end{theorem}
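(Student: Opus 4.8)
The plan is to combine a Cramer-rule recursion --- which expresses a pulled-back derivative of $g$ in terms of lower-order data divided by a power of the monomial $x^\ga$ --- with a sharp estimate for division by a monomial, and to feed both into an induction on $|\be|$.

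First I would record the recursion. Write $\det(\p\s/\p x)=x^\ga u$ with $u$ the nowhere-vanishing factor, and put $A_{il}:=(\mathrm{adj}(\p\s/\p x))_{il}/u$. The identity $(\p\s/\p x)\cdot\mathrm{adj}(\p\s/\p x)=\det(\p\s/\p x)\,I$ gives $\sum_i A_{il}\,\p_{x_i}\s_j=x^\ga\de_{jl}$, and applying the chain rule to $(\p^\be g)\circ\s$ in the role of $f$ yields, for all $\be$ and $l$,
\[
(\p^{\be+e_l}g)\circ\s \;=\; \frac{1}{x^\ga}\sum_{i=1}^{n}A_{il}\,\p_{x_i}\!\big[(\p^\be g)\circ\s\big].
\]
When $\cQ_M$ is closed under differentiation this can be iterated: at each step one differentiates the numerator (legitimate by closure) and divides off an $x_i$ wherever it occurs --- legitimate because the monomial factor accumulated so far forces divisibility --- arriving at $(\p^\be g)\circ\s=P_\be/x^{|\be|\ga}$, with $P_\be$ of class $\cQ_M$, a polynomial in the $A_{il}$, in derivatives of $\s$, and in derivatives of $f$ of order $\le|\be|$. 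Without closure one instead expands $f\circ\s^{-1}$ off the hypersurface $\{x^\ga=0\}$, where $\s$ is a local diffeomorphism, by the Fa\`a di Bruno formula; since the derivatives of $\s^{-1}$ of order $\le|\be|$ are polynomials in the entries of $\p\s/\p x$ times $\det(\p\s/\p x)^{-N}$ with $N\le 2|\be|$, this gives $(\p^\be g)\circ\s=Q_\be/x^{r_\be\ga}$ with $r_\be\le 2|\be|$ and with $Q_\be$ built from derivatives of $f$ of order $\le|\be|$, of $\s$, and of $1/u$ --- hence satisfying Denjoy--Carleman bounds. In either case, since the left side is $\cC^\infty$ on all of $V$, the monomial in the denominator divides the numerator in $\cC^\infty$.

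Next I would prove the division estimate: if $h\in\cC^\infty$ and $h/x_i^{m}\in\cC^\infty$, then from the Taylor-remainder representation
\[
(h/x_i^{m})(x)=\frac{1}{(m-1)!}\int_0^1(1-t)^{m-1}(\p_{x_i}^{m}h)(x_1,\dots,tx_i,\dots,x_n)\,dt
\]
one gets, on compact subsets, $\big|\p^\al(h/x_i^{m})\big|\le\frac{\al_i!}{(\al_i+m)!}\,\sup\big|\p^{\al+m e_i}h\big|\le\frac{1}{m!}\,\sup\big|\p^{\al+m e_i}h\big|$, and, iterating over the coordinates, the analogous bound for division by $x^{r\ga}$. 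The Beta-function gain $\al_i!/(\al_i+m)!$ is precisely what absorbs the factorial generated by throwing $m$ extra derivatives onto $h$; this is what makes the scheme close.

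Finally I would run the induction on $|\be|$: the numerator $P_\be$ (resp.\ $Q_\be$) is estimated by the Leibniz rule together with the log-convexity of $M$ in the form $M_jM_k\le M_{j+k}$, and then the denominator $x^{|\be|\ga}$ (resp.\ $x^{r_\be\ga}$) is cleared by the division estimate, which shifts the index of $M$ by $|\be|\,|\ga|$ (resp.\ by $r_\be\,|\ga|\le 2|\be|\,|\ga|$). Since the numerator already carries derivatives of $f$ up to order $|\be|$, and using that $\sup_{\s(K)}|\p^\be g|=\sup_K|(\p^\be g)\circ\s|$, this produces on $\s(K)$ a bound $\big|\p^{|\be|}g/\p y^\be\big|\le A\,B^{|\be|}\,\be!\,M_{(|\ga|+1)|\be|}$ when $\cQ_M$ is closed under differentiation, and $\big|\p^{|\be|}g/\p y^\be\big|\le A\,B^{|\be|}\,\be!\,M_{(2|\ga|+1)|\be|}$ in general --- the asserted estimates with $p=|\ga|+1$ and $p=2|\ga|+1$. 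The main obstacle is the combinatorial bookkeeping in this last step: one must verify that the constants produced along the induction grow only geometrically in $|\be|$ and that no factorial beyond $\be!$ survives. For this it is essential to exploit the sharp Beta-function gains above, to carry $\al!$ rather than $|\al|!$ in the hypotheses on $f$ and on the $A_{il}$, and to use the elementary inequalities $\al!\,\be!\le(\al+\be)!$ and $|\be|!\le n^{|\be|}\be!$ --- adapting to a general $\cQ_M$-mapping with monomial Jacobian the estimates of \cite[Sect.\,III, Prop.\,8]{CC} and \cite[Lemmas 3.1,\,3.4]{BBB}.
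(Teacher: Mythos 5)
Your treatment of the case where $\cQ_M$ is closed under differentiation is essentially the paper's argument: the paper also starts from the Cramer identity $(\p_{y_j}) = x^{-\ga}\, T\cdot(\p_{x_i})$ with $T = (\Jac\s)^*/\De$, realizes division by the monomial through a Taylor-remainder integral whose Beta-function factor $\al!/(\al+\ga)!$ absorbs the extra factorials, and closes an induction on $|\be|$. The organizational differences are that the paper divides by a single $x^\ga$ at each step of the induction (applying the fundamental theorem of calculus $|\ga|$ times per step to the smooth, inductively controlled function $T_{1i}\,\p_{x_i}(g^{(\be)}\circ\s)$) rather than accumulating $x^{|\be|\ga}$ and clearing it at the end, and that it packages the ``combinatorial bookkeeping'' you defer into an explicit weight $\Gamma(\al,\be)=\al!\prod_{j=1}^{|\be|}\max_i\{\al_i+j(\ga_i+1)\}$, which is precisely what absorbs the linearly growing coefficients produced along the recursion while leaving only $\be!$ at the end.

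The general case is where your proposal has a genuine gap: the route via the Fa\`a di Bruno expansion of $f\circ\s^{-1}$ does not yield $p=2|\ga|+1$, but only $p=2(|\ga|+1)$. Two costs compound in the worst terms (those with few blocks in the partition). First, the $k$-th derivative of $\s^{-1}$ carries $\det(\p\s/\p x)^{-(2k-1)}$, so for $|\be|=k$ the division cost is $(2k-1)|\ga|$, not $k|\ga|$. Second, its numerator is a polynomial in derivatives of $\s$ of total order up to $2k-2$ (not merely in first derivatives), and the resulting shifts of the $M$-index cannot be discarded precisely because in this branch $\cQ_M$ is not assumed closed under differentiation. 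Your tally ``$|\be|$ from $f$ plus $r_\be|\ga|\le 2|\be||\ga|$ from division'' omits the second contribution entirely; including it, already in one variable the term with $j$ blocks costs $j+(2k-2j)+(2k-j)|\ga|=(2k-j)(|\ga|+1)$, which is maximized at $j=1$ and exceeds $k(2|\ga|+1)$ as soon as $k>|\ga|+1$. The paper avoids this by running the identical first-order-operator induction in both cases --- only one new derivative of $\s$ (inside $T$) and one division by $x^\ga$ are introduced per unit increase of $|\be|$, because the smoothness of $g^{(\be)}\circ\s$ is exploited at every intermediate stage rather than only the smoothness of $f$ at the end --- and it isolates the difference between the two values of $p$ in the single hypothesis $|T_{ji}^{(\al)}|\le B^{|\al|}\al!\,M_{|\al|+|\ga|}$ in general versus $|T_{ji}^{(\al)}|\le B^{|\al|}\al!\,M_{|\al|}$ under closure under differentiation. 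To repair your argument, run your Cramer recursion (not the inverse-function expansion) in the general case as well, carrying the shifted bound for the adjugate entries through the induction.
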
 

\begin{corollary}\label{cor:estimate}
Let $\cQ_M$ denote a quasianalytic Denjoy-Carleman class, and let $\vp: V \to W$ be a proper (or semiproper),
generically submersive mapping of class $\cQ_M$, 
where $V$ and $W$ are $\cQ_M$-manifolds.  Let $g \in \cC^\infty(W)$. 
If $f := g \circ \vp \in \cQ_M(V)$, then, for every 
relatively compact open $U\subset W$, there exists $p\in \IN$ (depending only on $\vp$ and $U$) such that $g \in \cQ_{M^{(p)}}(\vp(V)\bigcap U)$.
\end{corollary}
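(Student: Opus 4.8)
The plan is to deduce Corollary \ref{cor:estimate} from Theorem \ref{thm:estimate} by a standard resolution-of-singularities argument that reduces the general generically submersive map $\vp$ to the monomial Jacobian situation handled there. First I would fix a relatively compact open $U \subset W$ and, using properness (or semiproperness) of $\vp$, choose a relatively compact open $V' \subset V$ such that $\vp(V') \supset \vp(V) \cap U$ and $\overline{V'}$ is compact; all subsequent work takes place over $\overline{V'}$. Since the statement is local on $W$ and invariant under $\cQ_M$-isomorphism in the target, it suffices to prove that $g$ is of class $\cQ_{M^{(p)}}$ near each point $b \in \vp(V) \cap U$, with $p$ depending only on $\vp$ and $U$; compactness of $\overline{\vp(V')}$ then lets us take a finite cover and the largest of the finitely many exponents.

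The key step is to apply resolution of singularities in the quasianalytic category (available since $\cQ_M$ is closed under composition with $\cQ_M$-maps, blowings-up, etc.) to the critical locus of $\vp$, simultaneously in source and target. Concretely, I would invoke a local normal form: after composing with a suitable sequence of blowings-up $\tau\colon \tV \to V$ over $\overline{V'}$ (with $\tV$ a $\cQ_M$-manifold) and a corresponding modification $\pi\colon \tW \to W$ near $b$, one arranges that $\tvp := \pi^{-1}\circ\vp\circ\tau$ is, in suitable local coordinates, a composition of a projection with a map whose Jacobian determinant is a monomial times a unit — this is precisely the hypothesis of Theorem \ref{thm:estimate} (the projection factor is harmless, as one can restrict $g$ to a slice, or equivalently apply the theorem fibrewise in the extra variables). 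The number of blowings-up needed, and hence the resulting monomial exponent $|\ga|$, depends only on $\vp$ restricted to the compact set $\overline{V'}$, i.e. only on $\vp$ and $U$; this is where the uniformity of $p$ comes from.

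With this normal form in hand, Theorem \ref{thm:estimate} (in its sharper form \eqref{eq:quasianDCest}, valid since a quasianalytic $\cQ_M$ is closed under differentiation) gives that $g\circ\pi$ — equivalently the pullback of $g$ under the modification of the target — is of class $\cQ_{M^{(p)}}$ on the image of the relevant compact set, with $p = |\ga|+1$. It then remains to descend this regularity estimate back down through $\pi$: since $\pi$ is a proper $\cQ_M$-map which is a $\cQ_M$-isomorphism over a dense open subset of its target and $g$ is already known to be $\cC^\infty$, the Denjoy-Carleman bounds for $g\circ\pi$ pull back to Denjoy-Carleman bounds for $g$ itself, possibly after enlarging $p$ by a bounded amount (again depending only on the fixed data), by the same kind of reasoning used for the source in Theorem \ref{thm:estimate} or by a Chaumat–Chollet–type descent. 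Taking the maximum of the finitely many exponents arising from the finite cover of $\overline{\vp(V')}$ yields a single $p$ depending only on $\vp$ and $U$.

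The main obstacle I anticipate is the descent step through the target modification $\pi$: Theorem \ref{thm:estimate} is stated for a map whose Jacobian is monomial, but a priori it controls $g$ only after base change in the target, and one must verify that the monomialization of $\vp$ can be carried out so that the induced map on the target is again of the monomial-Jacobian type (or trivial enough that descending the estimate costs only finitely much regularity). Handling the projection factor in the normal form — i.e., checking that the "extra" source variables not involved in the Jacobian monomial do not spoil the estimate — is a second, more routine point, most cleanly dispatched by restricting to a transverse slice on which $\vp$ becomes a genuine $\cQ_M$-map with monomial Jacobian between equidimensional spaces, to which Theorem \ref{thm:estimate} applies verbatim.
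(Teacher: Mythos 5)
Your overall strategy (resolve singularities so as to reduce to Theorem \ref{thm:estimate}) is the right one, but the way you set it up introduces a genuine gap. You propose to blow up \emph{both} the source and the target, producing a modification $\pi$ of $W$ and a lifted map with monomial Jacobian, and you then need to descend the Denjoy--Carleman bounds from $g\circ\pi$ back to $g$. You correctly identify this descent as ``the main obstacle,'' but you do not close it, and it cannot be waved away: passing from estimates on $g\circ\pi$ to estimates on $g$ through a non-invertible modification is exactly the loss-of-regularity problem that Theorem \ref{thm:estimate} exists to solve, so invoking it informally at this point is circular (and would in any case cost a further, uncontrolled shift of the class). Moreover, the ``local normal form'' you appeal to --- simultaneous source and target blowings-up making $\vp$ a projection composed with a monomial-Jacobian map --- is essentially monomialization of a morphism, which is not available off the shelf in this category. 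What resolution of singularities \emph{does} give you, and what the paper actually uses, is much weaker and entirely in the source: principalize the single function $\det(\p\psi_U/\p x)$ (together with the Jacobian of the resolving map) by blowings-up $\s:Z\to V$, so that $\det(\p(\psi_U\circ\s)/\p z)$ is locally a monomial times a unit. The target $W$ is never touched, Theorem \ref{thm:estimate} applies verbatim to $\psi_U\circ\s$ with $f\circ\s\in\cQ_M(Z)$, and no descent step arises.

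A secondary point: for the non-equidimensional case you suggest restricting to a transverse slice. This is delicate (the slice need not surject onto $\vp(K)$, and the restricted map can have worse singularities); the paper instead \emph{extends} $\vp$ to an equidimensional map $\psi_U(x)=(\vp(x),x_{n+1},\dots,x_m)$ as in \eqref{eq:ext}, notes that $f=(g\circ\mathrm{pr})\circ\psi_U$ with $\mathrm{pr}$ the projection to the first $n$ coordinates, and reads off the estimates for $g$ from those for $g\circ\mathrm{pr}$. Your localization/compactness argument for the uniformity of $p$ over $U$ is fine and matches the intended use of the statement.
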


\subsection{Model theory}\label{subsec:model}
Let $\cQ_M$ denote a quasianalytic Denjoy-Carleman class, and let $\IR_{\cQ_M}$ denote the expansion of the real field by
restricted functions of class $\cQ_M$ (i.e., restrictions to closed cubes of $\cQ_M$-functions, extended by $0$ outside the cube).
Then $\IR_{\cQ_M}$ is an $o$-minimal structure, and $\IR_{\cQ_M}$ is both polynomially bounded and model-complete \cite{RSW}. 

By \cite{Mil}, the class of $\cC^\infty$ functions
that are definable in any given polynomially bounded $o$-minimal structure satisfy the quasianalyticity property (Definition \ref{def:quasian}(3)). 
The following will be proved in Section 3, using the regularity estimate above.

\begin{theorem}\label{thm:definable}
Let $\cQ_M$ denote a quasianalytic Denjoy-Carleman class. If $f\in \cC^\infty(W)$, where $W$ is open in $\IR^n$, and $f$ is definable
in $\IR_{\cQ_M}$, then there exists $p \in \IN$ such that $f\in \cQ_{M^{(p)}}(W)$.
\end{theorem}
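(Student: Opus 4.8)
\textbf{Proof proposal for Theorem \ref{thm:definable}.}

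The plan is to reduce the statement, via resolution of singularities inside the quasianalytic class $\cQ_M$, to an application of Theorem \ref{thm:estimate}. Let $f\in\cC^\infty(W)$ be definable in $\IR_{\cQ_M}$. First I would invoke the description of $\cC^\infty$ definable functions in a polynomially bounded $o$-minimal structure: since $\IR_{\cQ_M}$ is polynomially bounded and model-complete, a definable $\cC^\infty$ function is built from $\cQ_M$-functions by the structure's operations, and in particular its graph is a $\cQ_M$-semianalytic (indeed $\IR_{\cQ_M}$-definable) set. The key structural fact I want is that, locally on $W$, there is a \emph{proper, generically submersive} mapping $\vp\colon V\to W$ of class $\cQ_M$ from a $\cQ_M$-manifold $V$ such that $f\circ\vp\in\cQ_M(V)$. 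Concretely: definability of $f$ produces a $\cQ_M$-manifold $V$ (a composition of local blowings-up with smooth $\cQ_M$-centres, obtained by resolving the singularities of the graph of $f$ together with the locus where $f$ fails to be $\cQ_M$) and a proper $\cQ_M$-morphism $\vp\colon V\to W$ which is a diffeomorphism over a dense open subset of $W$, with $f\circ\vp$ of class $\cQ_M$ on $V$. This is exactly the kind of uniformization statement available for quasianalytic classes admitting resolution of singularities (see the background in Section \ref{sec:quasian}).

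Granting such a $\vp$, Corollary \ref{cor:estimate} applies directly: with $g:=f$ (which is $\cC^\infty$ on $W$) and $f\circ\vp\in\cQ_M(V)$, the hypothesis of Corollary \ref{cor:estimate} is met, so for every relatively compact open $U\subset W$ there is $p\in\IN$ with $f\in\cQ_{M^{(p)}}(\vp(V)\cap U)$. Since $\vp$ is surjective onto (a neighbourhood basis of points of) $W$ — or at least its image is dense and, because $\vp$ is proper, closed, hence all of $W$ after localizing — we get $f\in\cQ_{M^{(p)}}$ on $U$. A covering argument then yields $f\in\cQ_{M^{(p)}}(W)$ for a single $p$: because $W$ can be exhausted by countably many such $U$ and the exponents $p$ produced are locally bounded (they depend only on the combinatorics $|\ga|$ of the Jacobian monomial of the resolution map over $U$, not on $f$), a standard patching of Denjoy-Carleman estimates — using that $\cQ_{M^{(p)}}\subset\cQ_{M^{(p')}}$ for $p\le p'$ and that the constants $A,B$ can be chosen uniformly on a compact — gives global membership in one shifted class.

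The main obstacle I expect is establishing the uniformization input, namely that a $\cC^\infty$ \emph{definable} function is $\cQ_M$ after a single proper generically submersive $\cQ_M$-pullback. One route: the graph $\Ga_f\subset W\times\IR$ is $\IR_{\cQ_M}$-definable, hence (by model-completeness and the cell decomposition / $\cQ_M$-stratification theory for $\IR_{\cQ_M}$) $\cQ_M$-semianalytic; resolving $\Ga_f$ to be a $\cQ_M$-submanifold by a proper $\cQ_M$-map $\pi$, and observing that $\pi$ composed with the projection to $W$ is generically submersive (generically a diffeomorphism, since $f$ is a function), one sees $f\circ\vp$ is the composite of a $\cQ_M$-submanifold chart with coordinate projections, hence $\cQ_M$. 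Care is needed because the resolution must be carried out within the class $\cQ_M$ itself (not merely the real-analytic category), but this is precisely the setting in which $\cQ_M$-resolution of singularities is available and is used elsewhere in the paper. Once this reduction is in hand, everything else is a formal consequence of Corollary \ref{cor:estimate} plus the elementary stability of Denjoy-Carleman estimates under finite unions and the inclusions $\cQ_{M^{(p)}}\subset\cQ_{M^{(p+1)}}$.
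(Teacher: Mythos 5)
Your local reduction (uniformize the graph of $f$ by a proper $\cQ_M$-map, note that the composite with the projection is generically submersive, and apply Corollary \ref{cor:estimate}) is essentially the paper's Remark \ref{rem:model} plus Corollary \ref{cor:estimate}, and that part is sound. The genuine gap is in your final ``covering argument.'' Corollary \ref{cor:estimate} gives, for each relatively compact $U\subset W$, an exponent $p_U$; to conclude $f\in\cQ_{M^{(p)}}(W)$ you need a \emph{single} $p$ that works for every compact subset of $W$, and the inclusion $\cQ_{M^{(p)}}\subset\cQ_{M^{(p')}}$ for $p\le p'$ only lets you pass to the \emph{largest} exponent, which does not exist if $p_U\to\infty$ as $U$ exhausts $W$. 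Your claim that the exponents are ``locally bounded'' because they depend only on the Jacobian monomial of the resolution over $U$ is true but beside the point: the exponents $|\ga|$ of that monomial can grow without bound as $U$ approaches $\partial W$ or infinity, since a non-compact resolution has no global bound on its Jacobian data. The paper itself flags exactly this failure right after the theorem statement: a $\cC^\infty$ function with merely sub-quasianalytic graph satisfies $f\in\cQ_{M^{(p_U)}}(U)$ for each relatively compact $U$, but $\{p_U\}$ need not be bounded. So the whole content of Theorem \ref{thm:definable} is the global uniformity of $p$, and your argument does not deliver it.

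The paper obtains the uniform $p$ by a compactification that uses definability in an essential way: it transports $W$ into the closed unit ball via the analytic isomorphism $\rho(x)=x/\sqrt{1+\|x\|^2}$, compactifies the target $\IR$ to $S^1$, and takes $X$ to be the closure of the graph of $g:=f\circ\rho^{-1}$ in $B^n\times S^1$. Because $f$ is definable in the o-minimal structure $\IR_{\cQ_M}$, this closure $X$ is a \emph{compact} sub-quasianalytic set, so Remark \ref{rem:model} produces a single \emph{compact} $\cQ_M$-manifold $Z$ with $\s(Z)=X$; applying Corollary \ref{cor:estimate} to this one compact resolution yields one $p$ valid on all of $\rho(W)$, hence on all of $W$. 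This is the step your proposal is missing: without compactifying and without using that the closure of the definable graph is still sub-quasianalytic, there is no mechanism forcing the local exponents to be bounded.
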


The graph of a function definable in $\IR_{\cQ_M}$ is \emph{sub-quasianalytic} in the obvious sense generalizing 
subanalytic. If $f\in\cC^\infty(W)$
has sub-quasianalytic graph, 
then $f \in \cQ_{M^{(p)}}(U)$, for some $p=p_U$, for every relatively compact open $U\subset W$;
in general, however, $\{p_U\}$ need not be bounded.

\section{Quasianalytic classes}\label{sec:quasian}

We consider a class of functions $\cQ$ given by the association, to every 
open subset $U\subset \IR^n$, of a subalgebra $\cQ(U)$ of $\cC^\infty (U)$ containing
the restrictions to $U$ of polynomial functions on $\IR^n$, and closed under composition 
with a $\cQ$-mapping (i.e., a mapping whose components belong to $\cQ$). 
We assume that $\cQ$ determines a sheaf of local $\IR$-algebras of $\cC^\infty$ functions on $\IR^n$,
for each $n$, which we also denote $\cQ$.

\begin{definition}[quasianalytic classes]\label{def:quasian}
We say that $\cQ$ is \emph{quasianalytic} if it satisfies the following three axioms:

\begin{enumerate}
\item \emph{Closure under division by a coordinate.} If $f \in \cQ(U)$ and
$$
f(x_1,\dots, x_{i-1}, a, x_{i+1},\ldots, x_n) = 0,
$$
where $a \in \IR$,  then $f(x) = (x_i - a)h(x),$ where $h \in \cQ(U)$.

\smallskip
\item \emph{Closure under inverse.} Let $\varphi : U \to V$
denote a $\cQ$-mapping between open subsets $U$, $V$ of $\IR^n$.
Let $a \in  U$ and suppose that the Jacobian matrix
$(\partial \varphi/\partial x)(a)$ is invertible. Then there are neighbourhoods $U'$ of $a$ and $V'$ of 
$b := \varphi(a)$, and a $\cQ$-mapping  $\psi: V' \to U'$ such that
$\psi(b) = a$ and $\psi\circ \varphi$  is the identity mapping of
$U '$.

\smallskip
\item \emph{Quasianalyticity.} If $f \in \cQ(U)$ has Taylor expansion zero
at $a \in U$, then $f$ is identically zero near $a$.
\end{enumerate}
\end{definition}

\begin{remarks}\label{rem:axioms} (1)\, Axiom \ref{def:quasian}(1) implies that, 
if $f \in \cQ(U)$, then all partial derivatives of $f$ belong to $\cQ(U)$. 

\smallskip\noindent
(2)\, Axiom \ref{def:quasian}(2) is equivalent to the property that the implicit function theorem holds for functions of 
class $\cQ$.  It implies that the reciprocal of a nonvanishing function of class $\cQ$ is also of class $\cQ$.

\smallskip\noindent
(3)\, Our two main examples of quasianalytic classes are quasianalytic Denjoy-Carleman classes (see \S\ref{subsec:DC}),
and the class of $\cC^\infty$ functions definable in a given polynomially bounded
$o$-minimal structure (\S\ref{subsec:model}). In the latter case, we can define a quasianalytic class $\cQ$ in the axiomatic
framework above by taking $\cQ(U)$ as the subring of $\cC^\infty(U)$ of functions
$f$ such that $f$ is definable in some neighbourhood of any point of $U$ (or, equivalently,
such that $f|_V$ is definable, for every relatively compact definable open $V\subset U$); the division and inverse properties
are immediate from definability and the corresponding $\cC^\infty$ assertions.
\end{remarks} 

The elements of a quasianalytic class $\cQ$ will be called \emph{quasianalytic functions}. 
A category of manifolds and mappings of class $\cQ$ can be defined in a standard way. The category 
of $\cQ$-manifolds is closed under blowing up with centre a $\cQ$-submanifold \cite{BMselecta}.

Resolution of singularities holds in a quasianalytic class \cite{BMinv}, \cite{BMselecta}. Resolution of
singularities of an ideal does not require that the ideal be finitely generated; see \cite[Thm.\,3.1]{BMV}.

\subsection{Quasianalytic Denjoy-Carleman classes}\label{subsec:DC}
We use standard multiindex notation: Let $\IN$ denote the nonnegative integers. If $\al = (\al_1,\ldots,\al_n) \in
\IN^n$, we write $|\al| := \al_1 +\cdots +\al_n$, $\al! := \al_1!\cdots\al_n!$, $x^\al := x_1^{\al_1}\cdots x_n^{\al_n}$,
and $\p^{|\al|} / \p x^{\al} := \p^{\al_1 +\cdots +\al_n} / \p x_1^{\al_1}\cdots \p x_n^{\al_n}$. We write $(i)$ for the
multiindex with $1$ in the $i$th place and $0$ elsewhere.

\begin{definition}[Denjoy-Carleman classes]\label{def:DC}
Let $M = (M_k)_{k\in \IN}$ denote a sequence of positive real numbers which is \emph{logarithmically
convex}; i.e., the sequence $(M_{k+1} / M_k)$ is nondecreasing.
A \emph{Denjoy-Carleman
class} $\cQ = \cQ_M$ is a class of $\cC^\infty$ functions determined by the following condition: A function 
$f \in \cC^\infty(U)$ (where $U$ is open in $\IR^n$) is of class $\cQ_M$ if, for every compact subset $K$ of $U$,
there exist constants $A,\,B > 0$ such that
\begin{equation}\label{eq:DC}
\left|\frac{\p^{|\al|}f}{\p x^{\al}}\right| \leq A B^{|\al|} \al! M_{|\al|}
\end{equation}
on $K$, for every $\al \in \IN^n$.
\end{definition}

\begin{remark}\label{rem:DC}
The logarithmic convexity assumption implies that
$M_jM_k \leq M_0M_{j+k}$, for all $j,k$, and that
the sequence $((M_k/M_0)^{1/k})$ is nondecreasing.
The first of these conditions guarantees that $\cQ_M(U)$ is a ring, and 
the second that $\cQ_M(U)$ contains the ring $\cO(U)$ of real-analytic functions on $U$,
for every open $U\subset \IR^n$.
(If $M_k=1$, for all $k$, then $\cQ_M = \cO$.)
\end{remark}

If $X$ is a closed subset of $U$, then $\cQ_M(X)$ will denote the ring of restrictions to $X$
of $\cC^\infty$ functions which satisfy estimates of 
the form \eqref{eq:DC}, for every compact $K\subset X$.

A Denjoy-Carleman class $\cQ_M$ is a quasianalytic class in the sense of Definition \ref{def:quasian}
if and only if the sequence
$M = (M_k)_{k\in \IN}$ satisfies the following two assumptions in addition to those
of Definition \ref{def:DC}.
\begin{enumerate}
\item[(a)] $\displaystyle{\sup \left(\frac{M_{k+1}}{M_k}\right)^{1/k} < \infty}$.

\smallskip
\item[(b)] $\displaystyle{\sum_{k=0}^\infty\frac{M_k}{(k+1)M_{k+1}} = \infty}$.
\end{enumerate}

It is easy to see that the assumption (a) implies that $\cQ_M$ is closed under differentiation.
The converse of this statement is due to S. Mandelbrojt \cite{Mandel}. In a Denjoy-Carleman class
$\cQ_M$, closure under differentiation is equivalent to the axiom \ref{def:quasian}(1) of closure under division by a
coordinate---the converse of Remark \ref{rem:axioms}(1) is a consequence of the fundamental
theorem of calculus:
\begin{equation}\label{eq:FTC}
f(x_1,\ldots,x_n) - f(x_1,\ldots, 0,\ldots, x_n) = x_i\int_0^1\frac{\p f}{\p x_i}(x_1,\ldots,tx_i,\ldots,x_n) dt
\end{equation}
(where $0$ in the left-hand side is in the $i$th place).

According to the Denjoy-Carleman theorem, the class $\cQ_M$ is quasianalytic (axiom \ref{def:quasian}(3))
if and only if the assumption (b) holds \cite[Thm.\,1.3.8]{Horm}.

Closure of a Denjoy-Carleman class $\cQ_M$ under composition is due to Roumieu \cite{Rou} and closure under
inverse to Komatsu \cite{Kom}; see \cite{BMselecta} for simple proofs. A Denjoy-Carleman class $\cQ_M$ satisfying 
the assumptions (a) and (b)
above is thus a quasianalytic class, in the sense of Definition \ref{def:quasian}.

If $\cQ_M$, $\cQ_N$ are Denjoy-Carleman classes, then $\cQ_M(U) \subseteq \cQ_N(U)$, for all $U$,
if and only if $\sup \left(M_k /N_k\right)^{1/k} < \infty$ (see \cite[\S1.4]{Th1}); in this case, we write
$\cQ_M \subseteq \cQ_N$. For any given Denjoy-Carleman class $\cQ_M$, there is a function in $\cQ_M((0,1))$
which is nowhere in any given smaller class \cite[Thm.\,1.1]{Jaffe}

\subsection{Shifted Denjoy-Carleman classes}\label{subsec:shift}
Given $M = (M_j)_{j\in \IN}$ and a positive integer $p$, let $M^{(p)}$ denote the sequence $M^{(p)}_j := M_{pj}$.

If $M$ is logarithmically convex, then $M^{(p)}$ is logarithmically convex:
$$
\frac{M_{kp}}{M_{(k-1)p}} = \frac{M_{kp}}{M_{kp-1}} \cdots \frac{M_{kp -p+1}}{M_{kp - p}} \leq
\frac{M_{kp+p}}{M_{kp+p-1}} \cdots \frac{M_{kp+1}}{M_{kp}} = \frac{M_{(k+1)p}}{M_{kp}}.
$$
Therefore, if $\cQ_M$ is a Denjoy-Carleman class, then so is $\cQ_{M^{(p)}}$. Clearly,
$\cQ_M \subseteqq \cQ_{M^{(p)}}$. Moreover, the assumption (a)
above for $\cQ_M$ immediately implies the same condition for $\cQ_{M^{(p)}}$. In general, however, it is not true 
that assumption (b) (i.e., the quasianalyticity axiom (3)) for $\cQ_M$ implies (b) for $\cQ_{M^{(p)}}$ \cite[Example 6.6]{Nelim}.

In particular, in general,
$\cQ_{M^{(p)}} \supsetneq \cQ_M$. Moreover, $\cQ_{M^{(2)}}$ is the smallest Denjoy-Carleman class containing
all $g \in \cC^\infty(\IR)$ such that $g(t^2)\in \cQ_M(\IR)$ \cite[Rmk.\,6.2]{Nelim}.

\section{Quasianalytic continuation}\label{sec:contin}
The goal of this section is to prove Theorem \ref{thm:contin}. We begin with three lemmas that strengthen
results in \cite[Sect.\,4]{BBB}.

Let $\cF_a$ denote the ring of formal power series centred at a point $a\in \IR^m$; thus 
$\cF_a \cong \IR\llb x_1,\ldots,x_m\rrb$. If $U$ is open in $\IR^m$ and $f \in \cC^\infty(U)$,
then $\hf_a \in \cF_a$ denotes the formal Taylor expansion of $f$ at a point $a \in U$; i.e.,
$\hf_a(x) = \sum_{\al\in\IN^m}(\p^{|\al|}f/\p x^{\al})(a)x^\al/\al!$ (likewise
for a $\cC^\infty$ mapping $U \to \IR^n$).

Let $\cQ$ denote a quasianalytic class (Definition \ref{def:quasian}).

\begin{lemma}\label{lem:contin}
Let $V,\,W$ denote open neighbourhoods of the origin in $\IR^m$, with coordinate systems
$x= (x_1,\ldots,x_m),\, y=(y_1,\ldots,y_m)$, respectively. (Assume $V$ is chosen so that every
coordinate hyperplane $(x_i = 0)$ is connected). Let $\psi:V \to W$ denote a $\cQ$-mapping such
that the Jacobian determinant $\det (\p \psi / \p x)$ is a monomial times
an invertible factor in $\cQ(V)$. Let $f \in \cQ(V)$ and let $H \in \cF_0$ be a formal power series
centred at $0\in W$, such that $\hf_0 = H\circ\hpsi_0$. Then, for all $\be \in \IN^m$, there exists
$f_\be \in \cQ(V)$ such that $f_0 = f$ and
\begin{enumerate}
\item for all $a\in\psi^{-1}(0)$ and all $\be\in \IN^m$, 
\begin{equation}\label{eq:cont}
\hf_{\be,a} = \frac{\p^{|\be|} H}{\p y^\be}\circ \hpsi_a;
\end{equation}
\item for all $a\in V$, $\hf_a = H_a\circ \hpsi_a$, where $H_a \in \cF_{\psi(a)}$ denotes the formal power series
\begin{equation}\label{eq:contin}
H_a := \sum_{\be \in \IN^m} \frac{f_\be(a)}{\be !}y^\be;
\end{equation}
\item each $f_\be$, $\be \in \IN^m$, and therefore also $H_a \in \cF_{\psi(a)}$ (as a function of $a$)
is constant on connected components of the fibres of $\psi$;
\smallskip
\item if $H$ is independent of some variable $y_j$, then $H_a$ is independent of $y_j$, for all $a\in V$.
\end{enumerate}
\end{lemma}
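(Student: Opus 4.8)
The strategy is to construct the family $\{f_\be\}_{\be\in\IN^m}$ recursively on $|\be|$, using closure under division by a coordinate (Axiom~\ref{def:quasian}(1)) as the engine, and then to verify properties (1)--(4) at each stage. First I would normalize the situation: after multiplying $\psi$ by the reciprocal of the invertible factor (legitimate by Remark~\ref{rem:axioms}(2)), and after a permutation of the target coordinates, I may assume $\det(\p\psi/\p x) = x^\ga$ for some $\ga\in\IN^m$, and that $\psi^{-1}(0)$ is (locally) contained in the union of the coordinate hyperplanes $\{x_i=0\}$ over those $i$ with $\ga_i>0$. The base case is $f_0:=f$; the hypothesis $\hf_0 = H\circ\hpsi_0$ gives \eqref{eq:cont} at $a=0$ for $\be=0$.

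\textbf{The recursive step.} Suppose $f_\be$ has been constructed for all $\be$ with $|\be|\le k$, satisfying \eqref{eq:cont} at every $a\in\psi^{-1}(0)$. Fix such a $\be$ and an index $j\in\{1,\dots,m\}$; I want to produce $f_{\be+(j)}$. The guiding identity is the chain rule: formally, $\p(H^{(\be)}\circ\hpsi)/\p x_i = \sum_{l} (H^{(\be+(l))}\circ\hpsi)\cdot(\p\psi_l/\p x_i)$. Inverting the Jacobian matrix $\p\psi/\p x$ (whose determinant is $x^\ga$) expresses each $H^{(\be+(j))}\circ\hpsi$ as a $\cQ$-combination of the $\p f_\be/\p x_i$ divided by $x^\ga$. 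Concretely, by Cramer's rule there is a matrix $C(x)$ with entries in $\cQ(V)$ such that $\sum_i C_{ji}(x)\,\p f_\be/\p x_i$ vanishes to the appropriate order along each hyperplane $\{x_i=0\}$, $\ga_i>0$ — this vanishing is exactly what \eqref{eq:cont} for the already-constructed $f_{\be'}$, $|\be'|\le k$, guarantees on $\psi^{-1}(0)$, hence (by quasianalyticity along the connected coordinate hyperplanes, which is where the connectedness hypothesis on $V$ is used) on all of $\{x_i=0\}$. Repeated application of Axiom~\ref{def:quasian}(1) then divides off the monomial $x^\ga$, producing $f_{\be+(j)}\in\cQ(V)$ with $\hf_{\be+(j),a} = H^{(\be+(j))}\circ\hpsi_a$ at every $a\in\psi^{-1}(0)$. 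One must also check this is well-defined, i.e.\ independent of which $j$ or which path of divisions is used; this follows because any two candidates agree to infinite order at each point of $\psi^{-1}(0)$, hence agree there by quasianalyticity, and the construction only ever evaluates $f_\be$ and its derivatives on $\psi^{-1}(0)$.

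\textbf{Deducing (2)--(4).} Property (1) is built in. For (2): fix $a\in V$ and set $b:=\psi(a)$; I claim $\hf_a = H_a\circ\hpsi_a$ with $H_a$ as in \eqref{eq:contin}. This is a formal identity in $\cF_a$, so it suffices to check that both sides have the same Taylor coefficients, which reduces — after applying $\p^{|\de|}/\p x^\de$ and using the chain rule on the right — to the collection of identities $\hf_{\de,a}(\cdot) = \sum_\be \tfrac{f_\be(a)}{\be!}(\text{chain-rule terms})$; these hold because the $f_\be$ were constructed to satisfy the chain-rule recursion formally at $a$ (this part does not need $a\in\psi^{-1}(0)$, since once the $f_\be$ exist as genuine $\cQ$ functions, the chain-rule relations among their Taylor expansions propagate by quasianalytic continuation from $\psi^{-1}(0)$ — this is the role of part~(3)). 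Property~(3), constancy of each $f_\be$ on connected components of fibres, follows by the same continuation argument applied fibrewise: $f_\be - f_\be(a)$ has zero Taylor expansion along directions tangent to a fibre at points where $\psi$ is submersive, and one propagates along the connected component using quasianalyticity; alternatively it is immediate from \eqref{eq:cont} on $\psi^{-1}(0)$ and then transported to a general fibre via a $\cQ$-change of coordinates straightening $\psi$ near a submersive point. Property~(4) is the easiest: if $H$ does not involve $y_j$, then $\p H/\p y_j = 0$, so the recursion gives $f_{\be+(j)}=0$ for every $\be$, whence every coefficient of $y_j$ in $H_a$ vanishes.

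\textbf{Main obstacle.} The delicate point is the well-definedness and consistency of the recursive division: at each stage I divide a $\cQ$-function by $x^\ga$ using Axiom~\ref{def:quasian}(1), but this requires knowing that the numerator vanishes along the relevant coordinate hyperplanes \emph{with the correct multiplicities}, and the only information available is the prescribed Taylor expansion \eqref{eq:cont} of the \emph{previously} constructed functions on $\psi^{-1}(0)$. Bridging from "correct Taylor expansion on $\psi^{-1}(0)$" to "actual vanishing on the full coordinate hyperplane $\{x_i=0\}$" is exactly where the hypothesis that each $\{x_i=0\}$ is connected, together with quasianalyticity (Axiom~\ref{def:quasian}(3)), must be invoked — and one has to arrange the bookkeeping so that the hyperplanes are divided off in a compatible order (essentially, induction on $|\ga|$ alongside the induction on $|\be|$). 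This interlocking of two inductions, and the verification that the chain-rule identities are preserved exactly, is the technical heart of the argument.
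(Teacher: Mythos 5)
Your construction of the $f_\be$ --- Cramer's rule followed by repeated division by the monomial $x^\ga$, with divisibility obtained by propagating a formal identity along the connected coordinate hyperplanes via quasianalyticity --- is the right engine, and is essentially how \cite[Thm.\,4.1]{BBB} (which the paper simply cites for items (1)--(3)) proceeds; your argument for (4) also matches the paper's short direct proof. Two corrections to the setup, though. First, the divisibility of $\sum_i C_{ji}\,\p f_\be/\p x_i$ by $x^\ga$ comes from the formal identity at the \emph{origin}, which lies on every hyperplane $\{x_i=0\}$ --- not from \eqref{eq:cont} ``on $\psi^{-1}(0)$''; relatedly, your induction hypothesis that \eqref{eq:cont} holds at every $a\in\psi^{-1}(0)$ is not available at the base case (the lemma only assumes $\hf_0=H\circ\hpsi_0$), so the recursion must be run on the formal data at $0$ alone, with (1) at general $a\in\psi^{-1}(0)$ deduced afterwards. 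Also, $\psi^{-1}(0)$ need not be contained in $\bigcup_{\ga_i>0}\{x_i=0\}$, but that claim is never actually used.

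The genuine gaps are in (2) and (3). For (3): since $V,W\subset\IR^m$, at a submersive point $\psi$ is a local diffeomorphism and the fibre component is a single point, so constancy there is vacuous; the entire content of (3) concerns fibres through critical points (e.g.\ the fibre $\psi^{-1}(0)=\{x_1=0\}$ of the blowing-up $(x_1,x_2)\mapsto(x_1,x_1x_2)$), which are positive-dimensional quasianalytic sets contained in the critical locus. There is no submersive point on such a component to straighten from, and ``propagating along the connected component'' of a set that is not a manifold is exactly the nontrivial step: one needs the quasianalytic curve selection lemma to join points of the fibre by quasianalytic arcs and argue as in Lemma~\ref{lem:nowak} (the paper notes that \cite{BBB} uses precisely such a lemma, their Lemma 4.2, for this item). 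For (2): ``quasianalytic continuation from $\psi^{-1}(0)$'' cannot reach an arbitrary $a\in V$, and as written your argument is circular --- you invoke (3) to get (2), while your sketch of (3) leans on (2)-type information. A correct route: on the dense open set where $x^\ga\De\neq 0$, the identity $\hf_a=H_a\circ\hpsi_a$ is the honest chain rule for the local composite $f\circ\psi_{\mathrm{loc}}^{-1}$ (the pointwise recursion defining the $f_\be$ has a unique solution there with $f_0=f$), and each Taylor coefficient of either side of the identity is a finite algebraic expression in the $f_\be(a)$ and the derivatives of $f$ and $\psi$ at $a$, hence continuous in $a$; so the identity passes to the closure $V$.
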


\begin{proof}
The lemma with items (2),\,(3) is a restatement of \cite[Thm.\,4.1]{BBB} 
(the proof of (3) in the latter, in fact, uses a simpler version \cite[Lemma 4.2]{BBB} of 
Lemma \ref{lem:nowak} below) and item (1) is contained in the proof of \cite[Thm.\,4.1]{BBB}.

To prove (4):  $H$ is independent of some variable $y_j$ if and only if $\p^{|\be|} H / \p y^\be = 0$ whenever
$\be = (\be_1,\ldots,\be_m)$ with $\be_j\neq 0$. By \eqref{eq:cont}, the latter condition implies that
that $\hf_{\be,0} = 0$ whenever $\be_j \neq 0$; i.e, that $f_\be = 0$ whenever $\be_j \neq 0$ (by quasianalyticity).
Therefore, if $H$ is independent of $y_j$, then, for every $a$, $f_\be(a)=0$ whenever $\be_j\neq 0$; i.e., $H_a$ is
independent of $y_j$, by \eqref{eq:contin}.
\end{proof}

\begin{lemma}\label{lem:nowak}
Let $\vp: V \to W$ denote a $\cQ$-mapping, where $W$ is a neighbourhood of $0$ in $\IR^n$
and $V$ is a $\cQ$-manifold. Let $f_\be \in \cQ(V)$, for all $\be\in\IN^n$. Assume that
\begin{enumerate}
\item 
all $f_\be$ are constant on the fibre $\vp^{-1}(0)$; i.e., 
\begin{equation*}
H_a := \sum_{\be \in \IN^n} \frac{f_\be(a)}{\be !}y^\be,\quad a \in \vp^{-1}(0),
\end{equation*}
is a formal power series $H_a = H$ independent of $a$;
\smallskip
\item
for all $a\in\vp^{-1}(0)$ and $\be\in \IN^m$,
\begin{equation*}
\hf_{\be,a} = \frac{\p^{|\be|} H}{\p y^\be}\circ \hvp_a.
\end{equation*}
\end{enumerate}
If $K$ is a compact subset of $\vp^{-1}(0)$, then all $f_\be$ are constant on all fibres of $\vp$
in some neighbourhood of $K$.
\end{lemma}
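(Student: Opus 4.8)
The plan is to prove Lemma~\ref{lem:nowak} by a connectedness argument, showing that the set of points near $K$ where all the $f_\be$ are locally constant on fibres is both open and closed, and contains $K$. More precisely, for $a\in V$ let us say that $a$ is \emph{good} if there is a neighbourhood $V_a$ of $a$ in $V$ and a formal power series $H_a\in\cF_{\vp(a)}$ such that $\hf_{\be,a'} = (\p^{|\be|}H_a/\p y^\be)\circ\hvp_{a'}$ for every $a'\in V_a\cap\vp^{-1}(\vp(a))$ that lies in the same connected component of the fibre as $a$; equivalently, that each $f_\be$ is constant on the connected component of $V_a\cap\vp^{-1}(\vp(a))$ through $a$, and the resulting formal series satisfy the compatibility \eqref{eq:cont} at $a$. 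By hypotheses (1) and (2), every point of $\vp^{-1}(0)$ is good, with the single series $H_a=H$. The goal is to show that the set of good points is open, so that it contains a neighbourhood of the compact set $K$, and then that on some (possibly smaller) neighbourhood the locally-constant property globalizes to: constant on \emph{all} of each fibre meeting that neighbourhood.

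The key technical input is Lemma~\ref{lem:contin}, which will be applied in suitable local coordinates. Given a point $a\in\vp^{-1}(0)$, after resolution of singularities of the ideal generated locally by the components of $\vp$ (using \cite{BMinv}, \cite{BMselecta}, \cite[Thm.\,3.1]{BMV}) — or, more directly, after a local factorization bringing $\vp$ near $a$ into the monomial-times-unit normal form of Lemma~\ref{lem:contin} composed with a coordinate change in the target — one obtains a $\cQ$-map $\psi$ through which the $f_\be$ pull back, and Lemma~\ref{lem:contin}(3) tells us that the pulled-back functions, hence the $f_\be$ themselves, are constant on connected components of the fibres of $\vp$ in a full neighbourhood of $a$. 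Since the original hypothesis gives a \emph{single} series $H$ valid along all of $\vp^{-1}(0)$, and since $\vp^{-1}(0)$ is (locally) connected where we work, the local series $H_a$ produced by Lemma~\ref{lem:contin} must agree with $H$ on $\vp^{-1}(0)$; and items (1)--(2) of Lemma~\ref{lem:contin}, together with the quasianalyticity axiom, propagate the compatibility to nearby fibres. This is how openness of the good set is obtained.

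The main obstacle, and the place where care is needed, is the passage from \emph{locally} constant on fibres to \emph{constant on all of each fibre} near $K$. A fibre $\vp^{-1}(b)$ with $b$ near $0$ need not be connected, and the various connected components carry a priori different values of the $f_\be$; what must be shown is that by shrinking to a sufficiently small neighbourhood $\Omega\supset K$ one forces all components of $\vp^{-1}(b)\cap\Omega$ that survive to correspond to a single series $H_b$. This is exactly the kind of ``spreading out'' argument from \cite[Sect.\,4]{BBB}: one covers $K$ by finitely many of the neighbourhoods $V_a$ on which Lemma~\ref{lem:contin} applies, notes that on the overlaps the formal series coincide (again by quasianalyticity, since they agree to infinite order along the common fibres), and then uses properness/local compactness of $K$ inside $\vp^{-1}(0)$ to choose $\Omega$ small enough that $\vp^{-1}(b)\cap\Omega$ deformation-retracts onto a subset of $\vp^{-1}(0)\cap\Omega$ on which the $f_\be$ are already known to be constant. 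The logarithmic-convexity and monomialization bookkeeping is routine once this topological reduction is in place; the genuine content is the interplay between quasianalyticity (forcing local formal solutions to be unique and hence to glue) and the compactness of $K$ (forcing the gluing to be uniform).

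\begin{proof}
We defer the argument; see the plan above for the structure of the proof.
\end{proof}
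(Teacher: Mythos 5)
There is a genuine gap, and it sits exactly at the point you flag as ``the main obstacle'': the passage from \emph{locally constant on connected components of fibres} to \emph{constant on entire (possibly disconnected) fibres} near $K$. Your proposed mechanism for closing it --- shrinking to $\Omega\supset K$ so that $\vp^{-1}(b)\cap\Omega$ ``deformation-retracts onto a subset of $\vp^{-1}(0)\cap\Omega$'' --- is not available: fibres of a quasianalytic map do not form a fibration near a degenerate fibre, and nearby fibres can have several components that limit onto $\vp^{-1}(0)$ in ways no retraction controls. More importantly, even if such a retraction existed it would not let you compare the value of $f_\be$ on two \emph{distinct} components of $\vp^{-1}(b)\cap\Omega$; a purely topological argument cannot see the formal hypothesis (2), which is what actually forces the agreement. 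The paper's proof supplies the missing idea via Nowak's argument: for a fixed $\be$ one considers the ``bad set'' $P$ of triples $(\xi,\eta,\zeta)$ with $\vp(\xi)=\zeta=\vp(\eta)$ and $f_\be(\xi)\neq f_\be(\eta)$; if the conclusion fails there is a point $(a_1,a_2,0)\in\overline{P}$ with $a_1,a_2\in\vp^{-1}(0)$ possibly in different components, and the quasianalytic curve selection lemma produces an arc $(\xi(t),\eta(t),\zeta(t))$ into $P$ emanating from it. Hypothesis (2) then gives $(f_\be\circ\xi)^{\wedge}_0=(\p^{|\be|}H/\p y^\be)\circ\hzeta_0=(f_\be\circ\eta)^{\wedge}_0$, and quasianalyticity in the single variable $t$ forces $f_\be\circ\xi=f_\be\circ\eta$, a contradiction. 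This one-dimensional quasianalytic continuation along a selected curve is the genuine content of the lemma and has no substitute in your plan.

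A secondary issue: you must also handle all $\be\in\IN^n$ on a \emph{single} neighbourhood of $K$. Your definition of ``good'' quantifies over all $\be$ at once, but the openness argument you sketch (via Lemma \ref{lem:contin} after local monomialization) only controls finitely many conditions at a time, and an intersection of infinitely many neighbourhoods need not be a neighbourhood. The paper resolves this first, by topological noetherianity: the decreasing sequence of closed quasianalytic sets $P_k$ (agreement of $f_\be(\xi)$ and $f_\be(\eta)$ for $|\be|\le k$, over pairs in a common fibre) stabilizes near $K$, reducing the problem to finitely many, hence to a single, $\be$. Your write-up would need both of these steps made explicit; as it stands the proof is deferred and the plan does not contain a workable replacement for either.
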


\begin{proof}[Proof \emph{(cf. proofs of \cite[Lemma 4.2, Cor.\,4.5]{BBB})}]
For every $k \in \IN$, let 
$$
P_k := \{(\xi,\eta,\zeta) \in V\times V\times W: \vp(\xi) = \zeta = \vp(\eta),\, f_\be(\xi) = f_\be(\eta),\, |\be|\leq k\}.
$$
Then the decreasing sequence of closed quasianalytic sets $P_0 \supset P_1 \supset P_2 \supset \cdots$
stabilizes in some neighbourhood of $K$, by
topological noetherianity \cite[Thm.\,6.1]{BMselecta}; say, $P_k = P_{k_0}$, $k\geq k_0$, in such a
neighbourhood. It follows that, if $f_\be$ is
constant on the fibres of $\vp$ in a given neighbourhood of $K$, for all $\be \leq k_0$, then $f_\be$ is
constant on the fibres of $\vp$ in some neighbourhood of $K$, for all $\be$.

Therefore, it is enough to prove the following assertion: given $\be \in \IN^n$, $f_\be$ is constant on the fibres of $\vp$ over $W$, in some neighbourhood of $K$. The following argument is due to Nowak \cite{Ndiv}.
Define 
\begin{equation*}
P:= \{(\xi, \eta, \zeta) \in V \times V \times W: \vp(\xi) = \zeta = \vp(\eta),\, f_\be(\xi) \neq f_\be(\eta)\}.
\end{equation*}

Suppose the assertion is false. Then
there is a point $(a_1,a_2,0) \in \overline{P}$ and,
by the quasianalytic curve selection
lemma (see \cite[Thm.\,6.2]{BMselecta}), a quasianalytic arc 
$(\xi(t), \eta(t), \zeta(t)) \in V\times V\times W$
such that $(\xi(0), \eta(0), \zeta(0)) = (a_1,a_2,0)$ and $(\xi(t), \eta(t), \zeta(t)) \in P$ if $t\neq 0$. Then
\begin{equation*}
(f_\be\circ\xi)^{\wedge}_0 = \hf_{\be,a_1}\circ\hxi_0 = \frac{\p^{|\be|}H}{\p y^{\be}}\circ\hvp_{a_1}\circ\hxi_0
= \frac{\p^{|\be|}H}{\p y^{\be}}\circ(\vp\circ\xi)^{\wedge}_0 = \frac{\p^{|\be|}H}{\p y^{\be}}\circ\hzeta_0.
\end{equation*}
Likewise, $(f_\be\circ\eta)^{\wedge}_0 = (\p^{|\be|}H / \p y^{\be})\circ\hzeta_0$, so that 
$(f_\be\circ\xi)^{\wedge}_0 = (f_\be\circ\eta)^{\wedge}_0$. Since
$f_\be\circ\xi,\, f_\be\circ\eta$ are quasianalytic functions of $t$, $f_\be\circ\xi = f_\be\circ\eta$; a contradiction.
\end{proof}

\begin{lemma}\label{lem:glue}
Let $\vp: V \to W$ denote a $\cQ$-mapping, where $W$ is a neighbourhood of $0$ in $\IR^n$
and $V$ is a $\cQ$-manifold. Let $K$ be a compact subset of $V$ 
and let $\{U\}$ be an open covering of $\vp^{-1}(0)\bigcap K$.
Let $H \in \cF_0$ be a power series centred at $0 \in W$. Suppose there exists 
$f_U \in \cQ(U)$, for each $U$, such that $\hf_{U,a} = \hvp_{a}^*(H)$, for all $a \in \vp^{-1}(0)\cap U$. 
Then there exists $f \in \cQ(V')$, where $V'$ is a neighbourhood of $\vp^{-1}(0)\bigcap K$,
such that $\hf_a = \hvp^*_a(H)$, for all $a \in \vp^{-1}(0)\bigcap K$.
\end{lemma}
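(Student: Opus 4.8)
The plan is to pass to a finite subcover and then induct on the number of charts, with the gluing of two charts as the basic engine. Since $K$ is compact and $\vp^{-1}(0)$ is closed, $C:=\vp^{-1}(0)\cap K$ is compact, so finitely many members $U_1,\dots,U_r$ of $\{U\}$ already cover $C$; write $f_i:=f_{U_i}$. Because $\cQ$ admits no partitions of unity, we cannot form $f=\sum_i\rho_if_i$; instead the $f_i$ must be patched after shrinking their domains so that any two shrunken domains overlap only on a set where the corresponding $f_i$ already coincide. So that the induction can close, I will prove the slightly stronger statement that the resulting $f\in\cQ(V')$ satisfies $\hf_a=\hvp^*_a(H)$ at \emph{every} $a\in\vp^{-1}(0)\cap V'$, not merely on $C$.

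The crux is the two-chart case: open sets $A,B$ covering a compact set $C\subset\vp^{-1}(0)$, with $h_A\in\cQ(A)$, $h_B\in\cQ(B)$ such that $\hat h_{A,a}=\hvp^*_a(H)$ for $a\in\vp^{-1}(0)\cap A$ and similarly for $h_B$. First, $h_A-h_B\in\cQ(A\cap B)$ has vanishing Taylor expansion at each point of $\vp^{-1}(0)\cap A\cap B$, so by quasianalyticity (axiom \ref{def:quasian}(3)) it vanishes on an open set $N\subseteq A\cap B$ with $\vp^{-1}(0)\cap A\cap B\subseteq N$; thus $h_A=h_B$ on $N$. The rest is point-set topology (using that the manifold $V$ is normal): separate the disjoint compacta $C\setminus B\subset A$ and $C\setminus A\subset B$ by disjoint open sets whose closures lie in $A$ and $B$, producing compacta $C_1\subset A$, $C_2\subset B$ with $C_1\cup C_2=C$ and $C_1\cap C_2\subseteq\vp^{-1}(0)\cap A\cap B\subseteq N$; then separate the disjoint compacta $C_1\setminus N$ and $C_2\setminus N$ by disjoint open sets $P_1,P_2$, and set $W_1:=(P_1\cup N)\cap A$ and $W_2:=(P_2\cup N)\cap B$. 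One checks that $C_1\subset W_1$, $C_2\subset W_2$, and $W_1\cap W_2=N\cap A\cap B\subseteq N$. Since $h_A=h_B$ on $N$, the function equal to $h_A$ on $W_1$ and to $h_B$ on $W_2$ is a well-defined element of $\cQ(W_1\cup W_2)$; and at any $a\in\vp^{-1}(0)\cap(W_1\cup W_2)$ it agrees near $a$ with $h_A$ or $h_B$, hence has Taylor expansion $\hvp^*_a(H)$ there --- this is the strengthened conclusion for two charts.

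The inductive step applies this with $B=U_r$, $h_B=f_r$, and with $A$ an open neighbourhood $V''$ of $\vp^{-1}(0)\cap(K\setminus U_r)$ and $h_A=g\in\cQ(V'')$ obtained from the inductive hypothesis applied to $U_1,\dots,U_{r-1}$ and the compact set $K\setminus U_r$. This is precisely where the strengthened conclusion is indispensable: $\vp^{-1}(0)\cap(K\setminus U_r)$ is disjoint from $U_r$, so without it we would have no point at which $g$ and $f_r$ are known to agree formally; with it, $g-f_r$ has vanishing Taylor expansion on $\vp^{-1}(0)\cap V''\cap U_r$ and the two-chart argument goes through, while $V''\cup U_r\supseteq\vp^{-1}(0)\cap K$. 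I expect the main obstacle to be exactly this bookkeeping --- choosing an inductive statement strong enough to propagate, and arranging the topological shrinkings so that the shrunken charts meet only inside the locus where the local solutions coincide; the quasianalytic input is used only in the single step converting "equal Taylor expansions along the fibre" into "equal on a neighbourhood of the fibre".
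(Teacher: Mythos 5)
Your proof is correct, and it is essentially the argument the paper has in mind: the paper simply cites \cite[Lemma 4.4]{BBB}, whose proof is the same scheme of passing to a finite subcover and patching after shrinking, with quasianalyticity (axiom (3) of Definition \ref{def:quasian}) upgrading equality of Taylor expansions along $\vp^{-1}(0)$ on overlaps to equality on a neighbourhood, and the sheaf property of $\cQ$ giving the glued function. Your explicit strengthening of the inductive statement to all of $\vp^{-1}(0)\cap V'$ is exactly the right bookkeeping to make the induction close.
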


\begin{proof} Lemma \ref{lem:glue} generalizes \cite[Lemma 4.4]{BBB}
and the proof is the same.
\end{proof}

\begin{proof}[Proof of Theorem \ref{thm:contin}] 
(1) is a restatement of \cite[Prop.\,4.6]{BBB}, so we only prove (2) here.

Write $\vp = (\vp_1,\ldots,\vp_n)$. Every point of $\vp^{-1}(0)$ has a neighbourhood $U$
in $V$ with a coordinate system $(x_1,\ldots,x_m)$ in which the Jacobian submatrix
$$
\frac{\p (\vp_1,\ldots,\vp_n)}{\p (x_1,\ldots,x_n)}
$$
is generically of rank $n$.

Fix such $U$ and define $\psi_U: U\to \IR^m$ by
\begin{equation}\label{eq:ext}
\psi_U(x_1,\ldots,x_m) = (\vp(x), x_{n+1},\ldots,x_m).
\end{equation}
Then, for all $a\in \vp^{-1}(0)$,
$$
\hf_a = H\circ \hvp_a = H_U\circ \hpsi_{U,a},
$$
where $H_U(y_1,\ldots,y_m) = H(y_1,\ldots,y_n)$; in particular, the formal
power series $H_U(y)$ is independent of $y_{n+1},\ldots,y_m$.

We claim there is a proper surjective mapping $\s_U: Z_U \to U$, where $Z_U$ is a $\cQ$-manifold
of dimension $m$ ($\s_U$ is a composite of finitely many blowings-up if $U$ is small enough)
such that every point of $Z_U$ has a neighbourhood with coordinates $(z_1,\ldots,z_m)$ in
which the Jacobian determinant $\det (\p (\psi_U\circ\s_U)/\p z)$ is a monomial in $z$
times an invertible factor. In fact, by resolution of singularities \cite[Thm.\,5.9]{BMselecta},
there exists $\s_U: Z_U \to U$ such that every point of $Z_U$ has a neighbourhood with
coordinates $(z_1,\ldots,z_m)$ in which $(\det(\p \psi_U/\p x))\circ\s_U$ and
$\det(\p \s_U/\p z)$ are both monomials times invertible factors, so the claim follows from
the chain rule.

Now, $\vp^{-1}(0)$ is covered by finitely many open sets $U$ as above. Choose compact $K_U \subset U$ so that
$\vp^{-1}(0) = \bigcup K_U$. Consider
the mapping $\s: Z\to V$ from the disjoint union $Z := \coprod Z_U$, 
where $\s$ is given by $\s_U$ on each $Z_U$. Set $\Phi := \vp\circ\s$.
By Lemmas \ref{lem:contin}, \ref{lem:nowak} and \ref{lem:glue}, after shrinking $W$ and the $Z_U$ to suitable neighbourhoods
of $0$ and the $\s_U^{-1}(K_U)$ (respectively), $\s(Z) = \vp^{-1}(W)$ and
there exist $f_\be \in \cQ(Z)$, $\be \in \IN^n$, such that each $f_\be$ is
constant on the fibres of $\Phi$ and, for all $U$ and $a'\in Z_U$, 
\begin{equation}\label{eq:compos}
(f\circ\s_U)^{\wedge}_{a'} = H_{a'}\circ\hvp_{\s_U(a')}\circ\hs_{U,a'},
\end{equation}
where
$$
H_{a'} = \sum_{\be\in\IN^n}\frac{f_\be(a')}{\be!}y^\be
$$
($\hs_{U,a'}$ can be defined using local coordinates $(z_1,\ldots,z_m)$ in a
neighbourhood of any point of $Z_U$).
So we can set $H_b := H_{a'}$, where $a'\in \Phi^{-1}(b)$, for any $b\in W$.

Finally, we note that the power series homomorphism $\cF_{\vp(a')} \to \cF_{a'}$
given by composition with $\hs_{U,a'}$ is injective, for any $a'\in Z_U$ as above.
This is a consequence of the fact that, with respect to local coordinates $(z_1,\ldots,z_m)$
at $a'$, the determinant of the Jacobian matrix of \emph{formal derivatives} $\p \hs_{U,a'} /\p z$
is a nonzero formal power series, by the quasianalyticity axiom and the fact that $\s_U$ is
generically of rank $m$. We can therefore conclude from \eqref{eq:compos} that 
$\hf_a = H_b\circ \hvp_a$, for all $b\in W$ and $a\in \vp^{-1}(b)$, as required.
\end{proof}

\section{Proofs of the main theorems}\label{sec:proofs}

In this section, we prove Theorems \ref{thm:source}, \ref{thm:target} and \ref{thm:definable}, using
the regularity
estimates of Theorem \ref{thm:estimate} and Corollary \ref{cor:estimate}, which are proved in Section \ref{sec:estimate}. For the composite function theorems we also need the quasianalytic continuation
theorem \ref{thm:contin}.

\subsection{Composite function theorems}\label{subsec:comp} 
We can assume that $W$ is an open neighbourhood of $0\in \IR^n$, in both Theorems \ref{thm:source} and
\ref{thm:target}.

\begin{remark}\label{rem:comp} \emph{Theorems \ref{thm:source} and \ref{thm:target} are equivalent} 
(for given $\dim V \geq n$):
With the hypotheses of Theorem \ref{thm:source}, it follows from the quasianalytic continuation theorem \ref{thm:contin} that
there is a neighbourhood of $a$ on which $f$ is formally composite with $\s$. It is then easy to see that Theorem \ref{thm:target}
implies \ref{thm:source}. Conversely, assuming Theorem \ref{thm:source}, it follows from the hypotheses of Theorem \ref{thm:target}
that each $a\in \vp^{-1}(0)$ has a relatively compact neighbourhood $U$ on which $f=g\circ\vp$, where 
$g\in \cQ_{M^{(p)}}(\vp(\overline{U}))$, 
for some $p$, and the conclusion of Theorem \ref{thm:target} is a simple consequence.
\end{remark}

\begin{proof}[Proof of Theorem \ref{thm:target}]
It follows from Theorem \ref{thm:contin} that, after
shrinking $W$ to a suitable neighbourhood of $b$, $f$ is formally composite with $\vp$. 
By a quasianalytic version \cite{Ncomp}
of Glaeser's theorem \cite{G}, $f=g\circ\vp$, where $g\in \cC^\infty(W)$. The theorem then follows from Theorem \ref{thm:estimate}
or Corollary \ref{cor:estimate}.
\end{proof}

\begin{remark}\label{rem:semiproper}
Theorems \ref{thm:target} and \ref{thm:contin} do not hold under the weaker 
assumption that $\vp$ is \emph{semiproper} and generically submersive, as in Glaeser's theorem,
unless we admit the possibility of also shrinking $V$ to a suitable neighbourhood of $\vp^{-1}(0)$.
(In general, the conclusion of Theorem \ref{thm:contin}(2) holds for the restriction of
$\vp$ to some neighbourhood of any compact $K\subset \vp^{-1}(0)$.)
Corollary \ref{cor:estimate} nevertheless holds for $\vp$
semiproper because $f$ is assumed to be a composite $g\circ\vp$, where $g$ is $\cC^\infty$.
\end{remark}

\subsection{Model theory}\label{subsec:model}

\begin{remark}\label{rem:model}
To prove Theorem \ref{thm:definable}, we use the fact that, for a given quasianalytic class $\cQ$,
any closed \emph{sub-quasianalytic subset} $X$ of $\IR^n$ (the analogue in class $\cQ$ of a subanalytic set)
is the image of a proper $\cQ$-mapping $\s: Z \to \IR^n$, where $Z$ is a $\cQ$-manifold of dimension $= \dim X$.
This can be proved in the same way as the proof in \cite{BMihes} for a subanalytic set: First, $X$ is (at least
locally) the image by a proper $\cQ$-mapping of a \emph{semi-quasianalytic} subset $Y$ of $\IR^q$, for some $q$
(essentially by definition). Moreover, we can assume that $\dim Y = \dim X$, by the \emph{fibre-cutting lemma}
\cite[Lemma\,3.6]{BMihes}. It is easy to see that a closed semi-quasianalytic set is a proper image of a closed
\emph{quasianalytic} set (the $\cQ$-analogue of an analytic set) of the same dimension \cite[Prop.\,3.12]{BMihes},
and the result then follows from resolution of singularities.
\end{remark}

\begin{proof}[Proof of Theorem \ref{thm:definable}]
Let $B^n \subset \IR^n$ denote the closed unit ball, and define $\rho: \IR^n\to B^n$ by
$$
\rho(x) := \frac{x}{\sqrt{1 + \|x\|^2}},\quad x \in \IR^n,
$$
where $\|x \|^2 := x_1^2 +\cdots x_n^2$. Then $\rho$ is an analytic isomorphism onto the open ball.
Let $g := f\circ\rho^{-1}: \rho(W) \to \IR \subset S^1$ (where $S^1$ is the compactification of $\IR$ by
adding a single point at infinity), and let $X$ denote the closure of the graph of $g$ in 
$B^n\times S^1 \subset \IR^n\times S^1$. Since $f$ is definable, $X$ is a compact sub-quasianalytic subset
of $\IR^n \times S^1$ (of class $\cQ_M$).

By Remark \ref{rem:model}, there is a compact $\cQ_M$-manifold $Z$, where $\dim Z = \dim X = n$,
and a $\cQ_M$-mapping $\s:Z\to \IR^n\times S^1$ such that $\s(Z)=X$. Write $\s = (\s_1,\s_2)$ with respect
to the projections to $\IR^n$ and $S^1$. Then $g\circ\s_1 = \s_2$ on 
$\s^{-1}(\rho(W)\times S^1)\subset Z$, so that
$g\circ\s_1$ is of class $\cQ_M$ on $\s^{-1}(\rho(W)\times S^1)$. 
By Corollary \ref{cor:estimate}, $g\in \cQ_{M^{(p)}}(\rho(W))$,
for some $p$. Therefore, $f\in \cQ_{M^{(p)}}(W)$, since $f=g\circ\rho$ and $\rho$ is an 
analytic isomorphism from $\IR^n$ to the open ball.
\end{proof}

\section{Regularity estimates}\label{sec:estimate}

\begin{proof}[Proof of Theorem \ref{thm:estimate}]
We can assume that $K = [-r,r]^n \subset \mathbb{R}^n$ for some $r>0$. Let $\Jac\s$ denote the Jacobian
matrix $(\p \s/\p x)$, and write $\det \Jac \s = x^\ga \De(x)$, where $\De$ is nowhere vanishing.
Let $T = (T_{ji}) := (\Jac \s)^*/ \De(x)$, where $(\Jac\s)^*$ denotes the adjugate matrix of $\Jac\s$.
Let $(\p_{x_i})$ denote the (column) vector with entries $\p_{x_i}:= \p/\p x_i$, $i=1,\ldots,n$. Thus,
\begin{equation} \label{eq:trans}
(\p_{x_i}) = \Jac \s \cdot (\p_{y_j})\quad\text{ or }\quad (\p_{y_j}) = \frac{1}{x^\ga}\cdot T\cdot (\p_{x_i}).
\end{equation}
Since $f$ and $\s$ are of class $\cQ_M$, there is a constant $B\geq 1$ such that
\begin{equation}\label{eq:impl1}
\begin{aligned}
|f^{(\alpha)}| &\leq B^{|\alpha|} \alpha! M_{|\alpha|} \\
|T_{ji}^{(\alpha)}| &\leq B^{|\alpha|} \alpha! M_{|\alpha|+|\gamma|} ,\quad i, j = 1,\ldots, n,
\end{aligned}
\end{equation}
on $K$, for all $\al \in \IN^n$. 

Set $p := 2|\gamma|+1$. We will show that 
\begin{equation}\label{eq:impl2}
\left| \frac{\p^{|\be|} g}{\p y^{\be}}\right|  \leq (n^2 p \xi E^p )^{|\beta|} \beta! \,  M_{p|\beta|}
\end{equation}
on $\s(K)$, for all $\be \in \IN^n$, where $\xi$ is a fixed positive constant and $E>\max\{1,B\}$ is big enough that
\[
\sum_{\lambda \in \mathbb{N}^n} \left(\frac{B}{E}\right)^{|\lambda|} < \xi
\]
The estimate \eqref{eq:DCest} follows. In the argument below, 
all estimates are understood to mean ``on $K$'' or ``on $\s(K)$'',
as the case may be, and we will not say this explicitly.

\begin{claim}\label{claim:impl}
For each $\beta\in \IN^n$, 
\begin{equation}\label{eq:impl}
\left| \frac{\p^{|\al|} (g^{(\be)}\circ\s)}{\p x^{\al}} \right| \leq   (n \xi)^{|\beta|} E^{p|\beta| + |\alpha|} \Gamma(\alpha,\beta) M_{p_{\beta}(\alpha)},
\end{equation}
for all $\al \in \IN^n$, where $p_{\be}(\al) :=  p|\be|+|\al|$ and
\begin{equation}\label{eq:gamma}
\Gamma(\alpha,\beta)= \alpha! \cdot \prod_{j=1}^{|\beta|} \max_{1\leq i\leq n}\left\{ \alpha_i + j (\gamma_i+1) \right\}.
\end{equation}
\end{claim}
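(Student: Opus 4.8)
The plan is to prove Claim \ref{claim:impl} by induction on $|\be|$, using at each step the transformation rule \eqref{eq:trans} to write a derivative of $g^{(\be)}\circ\s$ (one order higher in $\be$) in terms of derivatives of $g^{(\be')}\circ\s$ with $|\be'|=|\be|$, divided by the monomial $x^\ga$, and then eliminating the division by appealing to the hypothesis that $f=g\circ\s\in\cQ_M$. Concretely, for the base case $\be=0$ the estimate \eqref{eq:impl} reduces to the bound on $f^{(\al)}$ in \eqref{eq:impl1}, so I would check the constants match (using $p_0(\al)=|\al|$, $\Ga(\al,0)=\al!$, and $E^{|\al|}\geq B^{|\al|}$). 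For the inductive step, I would fix $j$ with $\be_j\neq 0$, apply the second identity in \eqref{eq:trans} to $g^{(\be-(j))}\circ\s$ to get
$$
g^{(\be)}\circ\s = \p_{y_j}\bigl(g^{(\be-(j))}\circ\s\bigr)
= \frac{1}{x^\ga}\sum_{i=1}^n T_{ji}\cdot \p_{x_i}\bigl(g^{(\be-(j))}\circ\s\bigr),
$$
so that $x^\ga\cdot(g^{(\be)}\circ\s)$ is an explicit $\cQ_M$-combination (via the $T_{ji}$) of first-order $x$-derivatives of $g^{(\be-(j))}\circ\s$, to which the inductive hypothesis applies.

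The key technical point is to differentiate this relation $\al$ times in $x$, using the Leibniz rule on the product $T_{ji}\cdot\p_{x_i}(g^{(\be-(j))}\circ\s)$ and on the left-hand side $x^\ga\cdot(g^{(\be)}\circ\s)$, and then to solve for $\p^{|\al|}(g^{(\be)}\circ\s)/\p x^\al$ by isolating the top-order term $x^\ga\cdot\p^\al(g^{(\be)}\circ\s)$ and moving the lower-order terms (those where at least one derivative hits $x^\ga$) to the other side. Since $x^\ga$ vanishes on the coordinate hyperplanes, one cannot literally divide; instead I would argue that the identity
$$
\frac{\p^{|\al|}}{\p x^\al}\Bigl(x^\ga\cdot (g^{(\be)}\circ\s)\Bigr)
= \sum_{\al'\leq \al}\binom{\al}{\al'}\frac{\p^{|\al'|}x^\ga}{\p x^{\al'}}\cdot\frac{\p^{|\al-\al'|}(g^{(\be)}\circ\s)}{\p x^{\al-\al'}}
$$
holds, where $\p^{|\al'|}x^\ga/\p x^{\al'}$ is itself a monomial times a constant, and then bound the right-hand side against the left-hand side computed via the $T_{ji}$-expression. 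The combinatorial content is entirely in the recursion for $\Ga(\al,\be)$: the factor $\max_i\{\al_i+j(\ga_i+1)\}$ in \eqref{eq:gamma} is exactly what is needed to absorb the $\binom{\al}{\al'}$ binomial coefficients and the derivatives of $x^\ga$ (which raise indices by at most $\ga_i$) at step $j=|\be|$, while the $n\xi$ per unit of $|\be|$ absorbs the sum over $i=1,\dots,n$ together with the constant $\xi$ bounding $\sum_\la(B/E)^{|\la|}$ that arises from the $T_{ji}^{(\la)}$ estimates. The logarithmic convexity of $M$ (in the form $M_jM_k\leq M_0M_{j+k}$ from Remark \ref{rem:DC}) is used to combine $M_{|\la|+|\ga|}$ from the $T_{ji}$ bound with $M_{p_{\be-(j)}(\cdot)}$ from the inductive hypothesis into a single $M_{p_\be(\al)}$, which is why the shift $p=2|\ga|+1$ (rather than $|\ga|+1$) appears: each of the $|\be|$ inductive steps contributes an extra $|\ga|$ to the index through the $T_{ji}$ factor, and one more $|\ga|$ is consumed in passing from $x$-derivatives back to $y$-derivatives at the very end.

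Once the Claim is established, deriving \eqref{eq:impl2} is the short final step: set $\al=0$ in \eqref{eq:impl}, note $\Ga(0,\be)=\prod_{j=1}^{|\be|}\max_i\{j(\ga_i+1)\}\leq \prod_{j=1}^{|\be|}j(|\ga|+1)=(|\ga|+1)^{|\be|}|\be|!$, and observe that $g^{(\be)}$, restricted to $\s(K)$, equals $g^{(\be)}\circ\s$ evaluated on $K$ (using surjectivity of $\s$ onto $\s(K)$). Bounding $|\be|!\leq n^{|\be|}\be!$ and collecting the constants $n$, $\xi$, $E^p$, $(|\ga|+1)$, together with $p|\be|=p_\be(0)$, yields \eqref{eq:impl2} with the stated constant $n^2p\xi E^p$, hence the estimate \eqref{eq:DCest}. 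The improvement \eqref{eq:quasianDCest} to $p=|\ga|+1$ when $\cQ_M$ is closed under differentiation would come from a sharper version of the same recursion in which the $T_{ji}$ are known a priori to lie in $\cQ_M$ with no index shift (i.e. $|T_{ji}^{(\al)}|\leq B^{|\al|}\al!M_{|\al|}$), removing one $|\ga|$ per step. The main obstacle I anticipate is bookkeeping: verifying that the recursion $\Ga(\al,\be)$ with the precise factor in \eqref{eq:gamma} is genuinely closed under the Leibniz expansion — i.e. that differentiating $x^\ga$ and the accompanying binomial coefficients are dominated by passing from the step-$(|\be|-1)$ product to the step-$|\be|$ product — and that the powers of $E$ and $n\xi$ are allocated consistently across the two uses (the $T_{ji}$ sum and the monomial-division step) at every stage of the induction.
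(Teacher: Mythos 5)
Your overall skeleton (induction on $|\be|$, base case reducing to \eqref{eq:impl1}, one $y$-derivative per step via \eqref{eq:trans}, Leibniz on $T_{ji}\cdot\p_{x_i}(g^{(\be)}\circ\s)$, log-convexity to merge the $M$-indices, $\xi$ absorbing $\sum_\la (B/E)^{|\la|}$) matches the paper's proof. But there is a genuine gap at the single most delicate point: how to handle the division by the vanishing monomial $x^\ga$. Your plan is to differentiate the identity $x^\ga\cdot(g^{(\be)}\circ\s)=\sum_i T_{ji}\,\p_{x_i}(g^{(\be-(j))}\circ\s)$ by Leibniz, ``isolate the top-order term $x^\ga\cdot\p^\al(g^{(\be)}\circ\s)$ and move the lower-order terms to the other side.'' This does not produce the required estimate: every term on the left, including the isolated one, carries a factor $\p^{\al'}(x^\ga)/\p x^{\al'}$ that still vanishes on the coordinate hyperplanes (unless $\al'\geq\ga$), so at a point of $K$ with $x^\ga=0$ you obtain at best a bound on $x^\ga\cdot\p^\al(g^{(\be)}\circ\s)$, which is $0$ there and says nothing about $\p^\al(g^{(\be)}\circ\s)$ itself. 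Dividing on the dense set $\{x^\ga\neq 0\}$ and extending by continuity also fails, because the factor $1/x^\ga$ is unbounded near the hyperplanes and the needed cancellation is exactly what has to be made quantitative. These points are not negligible: they are the critical locus (e.g.\ the exceptional divisor when $\s$ is a blowing-up).

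The paper's device, which your proposal is missing, is to use the a priori smoothness of $\p_{y_1}g^{(\be)}\circ\s$ (since $g\in\cC^\infty(W)$) together with the fundamental theorem of calculus applied $|\ga|$ times to represent the quotient directly as an integral,
\[
\frac{\p g^{(\be)}}{\p y_1}\circ\s
=\int_{[0,1]^{|\ga|}}\sum_{i=1}^n
\frac{\p^{|\ga|}}{\p x^{\ga}}\Bigl(T_{1i}\,\frac{\p\,(g^{(\be)}\circ\s)}{\p x_i}\Bigr)\bigl(\ldots,t_{k,j}x_k,\ldots\bigr)\,Q_0(t)\,dt,
\]
with kernel $Q_\al$ satisfying $\int Q_\al(t)\,dt=\al!/(\al+\ga)!$; this weight is precisely what cancels the binomial coefficients $\binom{\al+\ga}{\la}$ in the subsequent Leibniz expansion and feeds the recursion for $\Ga(\al,\be)$. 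This also corrects your index accounting: each inductive step costs $2|\ga|+1$ (one $|\ga|$ from the extra $\p^{|\ga|}/\p x^\ga$ in the integral representation, one $|\ga|$ from the shift $M_{|\la|+|\ga|}$ in the $T_{ji}$ bound, and $1$ from the extra $\p_{x_i}$), rather than $|\ga|$ per step plus a single $|\ga|$ ``at the very end'' as you describe; your tally would give the wrong value of $p$. The improvement to $p=|\ga|+1$ under closure under differentiation then comes only from removing the $T_{ji}$ shift, exactly as you guessed.
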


Note that $p_{\be}(0) =  p|\be|$, and that
\begin{align*}
\Gamma(0,\be) = \prod_{j=1}^{|\beta|}\max_{1\leq i\leq n}\left\{ j (\gamma_i+1) \right\} \leq \prod_{j=1}^{|\beta|} pj
= p^{|\beta|}|\beta|! \leq (n p)^{|\beta|} \beta!
\end{align*}
Therefore, \eqref{eq:impl} in the case that $\al=0$ implies \eqref{eq:impl2}; 
i.e., the estimate \eqref{eq:DCest} follows from Claim \ref{claim:impl}.

We will prove the claim by induction on $|\beta|$. Note that $\Gamma(\al,0) = \al!$. The claim is therefore true when $\beta = 0$, because in this case \eqref{eq:impl} reduces to \eqref{eq:impl1} (recall that $E>B$). Fix a multiindex $\tbe$, where $|\tbe| > 0$.
By induction, we assume the claim holds for all $\be$ such that $|\be| < |\tbe|$. 

Without loss of generality, there exists $\be \in \IN^n$ such that $\tbe = \be + (1)$. By \eqref{eq:trans} and the 
fundamental theorem of calculus (used $|\gamma|$ times),
\[
\frac{\partial g^{(\beta)}}{\partial y_1}  \circ \s = \int_{[0,1]^{|\gamma|}} \sum_{i=1}^n  \frac{\partial^{|\gamma|}}{\partial x^{\gamma}} \left( T_{1i} \cdot \frac{ \partial g^{(\beta)}\circ \s}{\partial x_i} \right) \bigg(\prod_{j=1}^{\gamma_k} t_{k,j} x_k\bigg) Q_{0}(t) dt 
\]
where $t:= (t_1,\ldots,t_n)$, $t_k:= (t_{k,1},\ldots, t_{k,\gamma_k})$, and 
$Q_0(t):=\prod_{k=1}^n \prod_{j=1}^{\gamma_k} t_{k,j}^{\gamma_k-j}$. It follows that, for all $\al \in \IN^n$, 
\[
\begin{aligned}
\left|\frac{\p^{|\al|}(g^{(\tbe)}\circ\s)}{\p x^{\al}}\right|\,  &= \left|\frac{\partial^{|\alpha|}}{\partial x^{\alpha}}\left(\frac{\partial g^{(\beta)}}{\partial y_1}  \circ \s \right)\right|\\
& \leq  \int_{[0,1]^{|\gamma|}} \sum_{i=1}^n \left|\frac{\partial^{|\gamma|+|\alpha|}}{\partial x^{\gamma+\alpha}} \left( T_{1i} \cdot \frac{ \partial g^{(\beta)}\circ \s}{\partial x_i} \right)
 \right| Q_{\alpha}(t) dt \\ 
&\leq \int_{[0,1]^{|\gamma|}} \sum_{i=1}^n \sum_{\lambda +\delta= \alpha+\gamma} \binom{\alpha+\gamma}{\lambda} \left| \frac{\partial^{|\lambda|} T_{1i}}{\partial x^{\lambda}} \cdot \frac{\partial^{|\delta|+1} g^{(\beta)} \circ \s}{\partial x^{\delta }\partial x_i}\right| Q_{\alpha}(t) dt 
\end{aligned}
\]
where $Q_{\alpha}(t) = \prod_{k=1}^n \prod_{j=1}^{\gamma_k} t_{k,j}^{\gamma_k+\alpha_k-j}$, so that:
\begin{equation}\label{eq:Q}
\int_{[0,1]^{|\gamma|}} Q_{\alpha}(t) dt = \frac{\alpha!}{(\alpha+\gamma)!} \,.
\end{equation}

Moreover, by \eqref{eq:impl1} and \eqref{eq:impl},
\[
\begin{aligned}
\left| \frac{\partial^{|\lambda|} T_{k,i}}{\partial x^{\lambda}}\right| & \leq B^{|\lambda|} \lambda! M_{|\lambda|+|\gamma|}\\
\left| \frac{\partial^{|\delta|+1} g^{(\beta)} \circ \s}{\partial x^{\delta }\partial x_i}\right| & \leq (n \xi)^{|\beta|} E^{p|\beta| + |\delta|+1} \Gamma(\delta+(i),\beta) M_{p_{\beta}(\delta+(i))}
\end{aligned}
\]
Since $p_{\beta}(\delta+(i)) + |\lambda|+|\gamma| = p|\beta| +2|\gamma|+1 + |\alpha| = p_{\tbe}(\alpha)$, 
the preceding inequalities imply (using log-convexity) that
\begin{equation}\label{eq:impl3}
\begin{aligned}
\left| \frac{\partial^{|\lambda|} T_{k,i}}{\partial x^{\lambda}}\cdot \frac{\partial^{|\delta|+1} g^{(\beta)} \circ \s}{\partial x^{\delta }\partial x_i}\right| &\leq (n \xi)^{|\beta|} B^{|\lambda|}E^{p|\beta| + |\delta|+1} \lambda!\, \Gamma(\delta+(i),\beta)  M_{p_{\tbe}(\alpha)}\\
&\leq  (n \xi)^{|\beta|} E^{p|\tbe|+|\alpha|} \left(\frac{B}{E}\right)^{|\lambda|}  \lambda! \,  \Gamma(\delta+(i),\beta)M_{p_{\tbe}(\alpha)}\,.
\end{aligned}
\end{equation}
We conclude from $\eqref{eq:Q}$ and $\eqref{eq:impl3}$ that
\[
\begin{aligned}
\left|\frac{\p^{|\al|}(g^{(\tbe)}\circ\s)}{\p x^{\al}}\right|\,  &\leq  \sum_{i=1}^n \sum_{\lambda +\delta = \alpha+\gamma} \binom{\alpha+\gamma}{\lambda} \frac{\alpha!\, \lambda!\,  \Gamma(\delta+(i),\beta)}{(\alpha+\gamma)!}  (n \xi)^{|\beta|} E^{p|\tbe|+|\al|}\left(\frac{B}{E}\right)^{|\lambda|}   M_{p_{\tbe}(\alpha)} \\
&\leq  \xi^{|\be|}(nE^p)^{|\tbe|}E^{|\alpha|} M_{p_{\tbe}(\alpha)} \sum_{\lambda +\delta = \alpha+\gamma} \frac{\alpha !}{\delta!}\max_{1\leq i\leq n} \{ \Gamma(\delta+(i),\beta)\} \left(\frac{B}{E}\right)^{|\lambda|}\,.
\end{aligned}
\]

Now, from the definition of $\Gamma(\alpha,\beta)$ and the fact that $\delta \leq \alpha+\gamma$, we obtain
\[
\begin{aligned}
\frac{\alpha ! }{\delta!} \max_{1\leq i\leq n} \{ \Gamma(\delta+(i),\beta)\} &= \alpha!
 \max_{1\leq i \leq n}\left\{ \frac{(\delta + (i))!}{\delta!} \prod_{j=1}^{|\beta|} \max_{1\leq k \leq n}\{ \delta_k +(1)+ j (\gamma_k+1)\} \right\} \\
 & \leq \alpha!
 \max_{1\leq i \leq n}\left\{ \frac{(\delta + (i))!}{\delta!} \prod_{j=1}^{|\beta|} \max_{1\leq k \leq n}\{ \alpha_k + \gamma_k+1+ j (\gamma_k+1)\} \right\} \\
 &\leq \alpha!
 \max_{1\leq i \leq n}\left\{ \frac{(\delta + (i))!}{\delta!} \right\}  \prod_{j=2}^{|\tbe|} \max_{1\leq i \leq n}\{ \alpha_i + j (\gamma_i+1)\}\\
 &\leq \alpha!
 \max_{1\leq i \leq n}\left\{ \alpha_i+\gamma_i+1 \right\}  \prod_{j=2}^{|\tbe|} \max_{1\leq i \leq n}\{ \alpha_i + j (\gamma_i+1)\}\\
 &= \alpha!
  \prod_{j=1}^{|\tbe|} \max_{1\leq i \leq n}\{ \alpha_i + j (\gamma_i+1)\} = \Gamma(\alpha,\tbe).
\end{aligned}
\]
We conclude, therefore, that
\[
\begin{aligned}
\left|\frac{\p^{|\al|}(g^{(\tbe)}\circ\s)}{\p x^{\al}}\right|\,  &\leq  \xi^{|\be|}(nE^p)^{|\tbe|} E^{|\al|}\Gamma(\alpha,\tbe) M_{p_{\tbe}(\alpha)} \sum_{\lambda \in \mathbb{N}^n} \left(\frac{B}{E}\right)^{|\lambda|}  \\
&\leq (n \xi E^p)^{|\tbe|} E^{|\alpha|} \Gamma(\alpha,\tbe) M_{p_{\tbe}(\alpha)},
\end{aligned}
\]
as required for the estimate \eqref{eq:DCest}.

It remains to show that we can get the estimate \eqref{eq:quasianDCest} if we make the stronger assumption
that $\cQ_M$ is closed under differentiation (i.e., $\cQ_M$ satisfies the assumption (a) of \S\ref{subsec:DC}). 
Under this assumption, we can strengthen the second inequality of \eqref{eq:impl1} to
$$
|T_{ji}^{(\alpha)}| \leq B^{|\alpha|} \alpha! M_{|\alpha|} ,\quad i, j = 1,\ldots, n,
$$
and the required estimate \eqref{eq:quasianDCest} can be obtained exactly as above.
\end{proof}

\begin{remark}
It is possible to prove Theorem \ref{thm:estimate} also by an argument similar to that of Chaumat and Chollet
\cite[Sect.\,III]{CC}. Indeed, in the notation of \cite{CC} (for a reader familiar with the latter), the estimate in
\cite[Lemma III.4]{CC} can be improved to
\[
\left| \frac{\partial}{\partial x^S}\left( T_{K,L}\right)(x) \right| \leq C_3^{l-s+1} \frac{(s+l)!}{k!} M_{ps}
\]
where $p=2|\ga|+1$, and this can be used in a calculation similar to that of \cite{CC}.
\end{remark}

\begin{proof}[Proof of Corollary \ref{cor:estimate}]
If $\dim V= \dim W$, then, by resolution of singularities, there is a proper surjective mapping $\s: Z\to V$ of class
$\cQ_M$ ($\dim Z = \dim V$) such that $\det (\p (\vp\circ\s)/\p x)$ is locally a monomial times an invertible factor (as in the proof
of Theorem \ref{thm:contin}), so the result follows from Theorem \ref{thm:estimate}.

In general, we can reduce to the preceding special case by using a mapping analogous to \eqref{eq:ext},
locally.
\end{proof}

\bibliographystyle{amsplain}

\end{document}